\newtheorem{thm}{Theorem}[section]
\newtheorem{pro}[thm]{Proposition}
\newtheorem{lem}[thm]{Lemma}
\newtheorem{cor}[thm]{Corollary}
\newtheorem*{rem*}{Remarks}
\newtheorem{rems}[thm]{Remark}
\newtheorem*{conj*}{Conjecture}
\DeclareMathOperator{\C}{\mathbb{C}}
\DeclareMathOperator{\F}{\mathbb{F}}
\DeclareMathOperator{\Q}{\mathbb{Q}}
\DeclareMathOperator{\R}{\mathbb{R}}
\DeclareMathOperator{\Z}{\mathbb{Z}}
\DeclareMathOperator{\Spec}{Spec}
\DeclareMathOperator{\Tr}{Tr}
\DeclareMathOperator{\re}{Re}
\DeclareMathOperator{\id}{id}
\newcommand{\mrm}{\mathrm}
\newcommand{\mbb}{\mathbb}
\title{Abelian varieties and transversal index theorems}
\author{Ouidad Filali}
\address{Mathematisches Institut, Westf\"alische Wilhelms-Universit\"at, Einsteinstrasse 62, 48149 M\"unster, Germany.}
\email{ouidadf@hotmail.com}
\author{Francesco Lemma}
\address{Institut math\'ematique de Jussieu-Paris Rive Gauche, UMR 7586, B\^atiment Sophie Germain, Case 7012, 75205 Paris Cedex 13.}
\email{francesco.lemma@imj-prg.fr}
\begin{document}

\maketitle

\begin{abstract}
We interpret the "explicit formula" in the sense of analytic number theory for the zeta function of an ordinary abelian variety of dimension $g$ over a finite field as a transversal index theorem on a Riemannian foliated space of dimension $2g+1$. This generalizes a work of Deninger for elliptic curves.
\end{abstract}

\section{Introduction}

In the search for an understanding of the properties of zeta and $L$-functions, Deninger has developed analogies between the theory of dynamical systems on certain foliated spaces and arithmetic geometry \cite{deninger2}, \cite{deninger1}, \cite{deninger3}. For example, to a $d$-dimensional regular scheme $\mathcal{X}$ of finite type over $\Spec \Z$ should correspond a triple $(X, \mathcal{F}, \phi^t)$ where $X$ is a certain $(2d+1)$-dimensional space endowed with a one codimensional lamination $\mathcal{F}$ and a flow $\phi^t$ whose orbits are transversal to the leaves and such that the closed points $x \in \mathcal{X}$ should correspond to the closed orbits $\gamma$ of $\phi^t$. In the simple case where $\mathcal{X}$ is an elliptic curve over a finite field, Deninger constructed such a three dimensional foliated dynamical system $(X, \mathcal{F}, \phi^t)$ and interpreted the explicit formula for the zeta function of the elliptic curve as a transversal index theorem on $(X, \mathcal{F}, \phi^t)$ \cite{deninger}. In this paper, we will be concerned in generalizing Deninger's construction to higher dimension.\\

Let $A_0/k$ be an abelian variety of dimension $g$ over the finite field $k=\F_q$, let $|A_0|$ the set of closed points of $A_0$ and, for $x \in |A_0|$, let $\deg(x)=[k(x):\F_q]$ be the degree of the residue field of $x$. The zeta function 
$$
\zeta_{A_0}(s)=\prod_{x \in |A_0|} \frac{1}{1-q^{-s\deg(x)}}
$$
converges for $\re s > g$. Let $l$ be a prime different from the characteristic of $k$, let $T_l(A_0)$ be the $l$-adic Tate module of $A_0$, a free $\Z_l$-module of finite rank $2g$, and let $\mu_1, \ldots, \mu_g$ be the eigenvalues of the arithmetic Frobenius endomorphism of $T_l(A_0)$. Then we have
$$
\zeta_{A_0}(s)=\prod_{j=0}^{2g} P_j(q^{-s})^{(-1)^j}
$$
where $
P_j(X)=\prod(1-\mu_{i_1}\ldots \mu_{i_j}X)$, the product being taken over all $j$-uples $(i_1, \ldots, i_j)$ such that $1 \leq i_1 < \ldots < i_j \leq 2g$. Moreover, the functional equation
$$
\zeta_{A_0}(s)=\pm \zeta_{A_0}(g-s)
$$
is satisfied. The explicit formula we are interested in is the following. Given a test function $\alpha \in \mathcal{C}^{\infty}_0(\R)$ we have
\begin{equation} \label{EF}
\sum_{j=1}^{2g} \sum_{\rho_j}(-1)^{j} \Phi(\rho_j) = \log q \sum_{x \in |A_0|}\deg(x) \sum_{k \geq 1} \alpha(k \deg(x) \log q)
\end{equation}
\begin{eqnarray*}
\,\,\,\,\,\,\,\,\,\,\,\,\,\,\,\,\,\,\,\,\,\,\,\,\,\,\,\,\,\,\,\,\,\,\,\,\,\,\,\,\,\,\,\,\,\,\,\,\,\,\,\,\,\,\,\,\,\,\,\,\,\,\,\,\,\,\,\,\,\,\,\,\,\,\,\,\,\,\,\,\,\,\,\,\,\,\,\,\,\,\,\,\,\,+ \log q \sum_{x \in |A_0|}\deg(x) \sum_{k \leq -1} q^{kg\deg(x)} \alpha(k \deg(x) \log q)
\end{eqnarray*}
where $\Phi(s)$ is the function defined by the integral
$$
\Phi(s)=\int_{\R}e^{ts} \alpha(t) dt
$$
and where the sums $\sum_{\rho_j}$ in the left hand term are indexed by the zeroes of $P_j(q^{-s})$. This equality can be proved in a similar, and in fact easier way as in the standard case \cite{barner} 4. Under the assumption that $A_0$ is ordinary, we are going to interpret the explicit formula as a transversal index theorem on a $(2g+1)$-dimensional Riemannian foliated space $S(A_0)$. See Cor. \ref{main} for a precise statement.Transversal index theory is concerned with differential operators or complexes of such operators, which are elliptic in the directions transversal to the orbits of an action by a Lie group, which in our case will simply be $\R^\times_+$. For a short introduction to such a theory, the reader is referred to \cite{deninger} 2.\\

\textbf{Acknowledgements.} The main ideas of the present article have their origin in the 2007 PhD thesis of the first named author under supervision of Christopher Deninger \cite{filali}. Recently, the second named author worked on the subject idependently and brought a new idea. The second named author would like to thank Eric Urban for his invitation to Columbia University, where part of this work has been done and Michael Harris for support. Finally, we would like to thank Christopher Deninger, Frans Oort and Felipe Voloch for enlightening correspondence.

\section{Preliminaries on abelian varieties}\label{preliminaires} In this section, we would like to recall some classical facts about abelian varieties, the Frobenius endomorphism and ordinarity.\\

Let $A/k$ be an abelian variety over a field $k$, of dimension $g>0$ and let $\phi \in \mrm{End}(A)$ be an endomorphism. There exists a polynomial $P_\phi \in \Z[X]$ of degree $2g$ such that for any $t \in \Z$ we have $P_\phi(t)=\deg(\phi-[t])$ where $[t]$ denotes the endomorphism of $A$ given by multiplication by $t$ (see \cite{cornell-silvermann} p. 125). For any prime number $l$, denote by $T_l(A)$ the $l$-adic Tate module of $A$. It follows from \cite{mumford} IV \S 19 Thm. 4 that if $l \neq \mrm{char}(k)$ the polynomial $P_\phi$ is the characteristic polynomial of the endomorphism $T_l(\phi)$ induced by $\phi$ on $T_l(A) \otimes_{\Z_l} \Q_l$. Assume that $k=\mbb{F}_q$ is a finite field with $q$ elements and that $\phi$ is the $q$-th power Frobenius endomorphism, then $\deg(\phi)=q^g$ and hence 
\begin{equation} \label{det}
\det T_l(\phi)=q^g
\end{equation}
for any prime $l$ different from the characteristic of $k$.\\

Let $A_0/k$ be an abelian variety of dimension $g$ over a finite field $k=\F_q$ of characteristic $p$. For any integer $n \geq 0$, let us denote by $G_n$ the kernel of the multiplication by $p^n$ on $A_0$. One says that $A_0$ is ordinary if the following equivalent conditions are satisfied:\\
\indent 1. $G_1(\overline{k}) \simeq (\Z/p\Z)^g$,\\
\indent 2. $G_n(\overline{k}) \simeq (\Z/p^n\Z)^g$ for all $n \geq 0$.\\
A proof of the equivalence of these two conditions can be found in \cite{mumford} \S 15 "the $p$-rank". Let $G_n^0$ be the connected component of the identity in $G_n$ and let $G_n^{et}$ be the largest \'etale quotient of $G_n$. Then, there is an exact sequence
$$
0 \longrightarrow G_n^0 \longrightarrow G_n \longrightarrow G_n^{et} \longrightarrow 0
$$ 
which is split by \cite{tate} (3.7) IV. If $A_0/k$ is ordinary, then by the Serre-Tate theorem \cite{serre-tate}, it admits a canonical lift $\mathcal{A}$ over the ring of Witt vectors $W$ of $k$. This canonical lift is characterized by the following equivalent conditions (see \cite{deligne} 3):\\
\indent 1. The $p$-divisible group of $\mathcal{A}$ is the product of the $p$-divisible groups lifting the connected component of the identity and the largest \'etale quotient of the $p$-divisible group of $A$,\\
\indent 2. Every endomorphism of $A_0$ lifts uniquely to $\mathcal{A}$.

\section{The Riemannian foliated dynamical system $(S(A_0), \mathcal{F}, \phi^t)$} Let $A_0$ be a $g$-dimensionnal ordinary abelian variety over $k=\F_q$, let $p$ be the characteristic of $k$ and let $\phi_0: A_0 \longrightarrow A_0$ be its $q$-th power Frobenius endomorphism. Let $\mathcal{A}$ be the Serre-Tate canonical lift of $A_0$ to $W$ and let $\phi: \mathcal{A} \longrightarrow \mathcal{A}$ be the endomorphism lifting $\phi_0$. Let $K=W[1/p]$ and let $A=\mathcal{A} \otimes_W K$ be the generic fibre of $\mathcal{A}$. In what follows, we fix an embedding $\iota: K \longrightarrow \C$ so that we can consider the complex analytic abelian variety $A(\C)$. Let us denote by $\Gamma$ the first integral homology group $\Gamma=H_1(A(\C), \Z)$, which is a free $\Z$-module of rank $2g$. Via the natural map
$$
\Theta_{\iota}: \mrm{End}_K(A) \longrightarrow \mrm{End}_{\C}(A(\C)),
$$
the Frobenius lift $\phi \otimes K$ induces the endomorphism $$\xi=\Theta_{\iota}(\phi \otimes K)_* \in \mrm{End}_{\Z}(\Gamma).$$
For any prime $l \neq p$, we have a functorial isomorphism $\Gamma \otimes \Z_l = T_l(A)$ of $\Z_l$-modules.  Hence, it follows from (\ref{det}) that
\begin{equation} \label{determinant}
\det(\xi)=q^g
\end{equation}
As the $\R$-vector space $\Gamma \otimes \R$ is identified to the Lie algebra of $A(\C)$,  it has a complex structure for which the endomorphism $\xi \otimes \mrm{id}_{\R}$ is $\C$-linear. In what follows, we will identify the Lie algebra $\Gamma \otimes \R$ to $\C^g$ and we denote by $\xi_{\R} \in \mrm{End}_{\C}(\C^g)$ the endomorphism $\xi \otimes \mrm{id}_{\R}$. Let us introduce the following $\xi$-adic "Tate modules":
\begin{eqnarray*}
T_{\xi} \Gamma &=& \underleftarrow{\lim}_\nu \Gamma/\xi^\nu \Gamma,\\
V_{\xi} \Gamma &=& T_{\xi} \Gamma \otimes \Q.
\end{eqnarray*}
Note that multiplication by $\xi$ defines an automorphism of $V_\xi \Gamma$. We also define an additive subgroup of $\C^g$ by
\begin{eqnarray*}
V &=& \bigcup_{\nu \geq 0} \xi^{-\nu}_{\R} \Gamma
\end{eqnarray*}
The group $V$ acts on $\C^g \times V_\xi \Gamma$ by $v.(z, \hat{v})=(z+v, \hat{v}-v)$ and we denote by $\C^g \times_V V_\xi \Gamma$ the quotient space. The group $q^{\Z}$ acts on $(\C^g \times_V V_\xi \Gamma) \times \R^\times_+$ by $q^\nu ([z, \hat{v}], x)=([\xi^{-\nu}_{\R}z, \xi^{-\nu} \hat{v}], q^\nu x)$ and we denote by
$$
S(A_0)=(\C^g \times_V V_\xi \Gamma) \times_{q^{\Z}} \R^\times_+
$$
the quotient space. The images of the sets $\C^g \times \{\hat{v}\} \times \{x\}$ by the natural projection map $\pi: \C^g \times V_\xi \Gamma \times \R^\times_+ \longrightarrow S(A_0)$ form a partition $\mathcal{F}$ of $S(A_0)$ such that $(S(A_0), \mathcal{F})$ is a foliated space (see \cite{cc} Def. 11.2.12). The tangent spaces to the leaves form a $\R$-vector budle $T\mathcal{F}$ over $S(A_0)$. We define a flow $\phi^t$ on $(S(A_0), \mathcal{F})$ by $$\phi^t([z, \hat{v}, x])=[z, \hat{v}, e^tx].$$ Clearly, this flow sends each leaf to another leaf.\\

As the complex torus $\C^g/\Gamma$  is the set of complex points of an abelian variety, it admits a non-degenerate Riemann form $H$ (see \cite{cornell-silvermann} Thm. A p. 85). This means that there exists a positive definite hermitian form $H$ on $\C^g$ such that the restriction of the imaginary part $\mrm{Im} H$ to $\Gamma$ is integral valued. Let us fix such an hermitian form $H$ once and for all and let us denote by $\Psi: \Gamma \otimes \Gamma \longrightarrow \Z$ the induced pairing. Note that, for any $\eta_1, \eta_2 \in \C^g$, we have $\mrm{Re} H(\eta_1, \eta_2)=\mrm{Im} H(i\eta_1, \eta_2)$. 

\begin{pro} \label{metrique} The Riemannian metric on the bundle $T\C^g \times V_\xi \Gamma \times \R^\times_+$ over the space $\C^g \times V_\xi \Gamma \times \R^\times_+$ given by the formula $$
\tilde{g}_{[z, \hat{v}, x]}(\eta_1, \eta_2)=x \mrm{Re} H(\eta_1, \eta_2)$$
induces a metric $g$ along the leaves of $(S(A_0), \mathcal{F})$ such that $$(\phi^t)^*g=e^t g.$$
\end{pro}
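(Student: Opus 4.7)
The plan is to show $\tilde g$ descends to a leafwise metric on $S(A_0)$ by checking its invariance under the actions of $V$ and $q^{\Z}$, and then to verify the conformal scaling under $\phi^t$ directly from the formula. The leafwise tangent bundle is the $T\C^g$-summand of $T\C^g \times V_\xi\Gamma \times \R^{\times}_+$, and along it $x \mrm{Re}\, H$ is positive definite since $H$ is positive Hermitian and $x > 0$.

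Invariance under $V$ is immediate: $v \in V$ acts by the translation $(z,\hat v, x) \mapsto (z+v, \hat v - v, x)$, whose differential on the $T\C^g$-factor is the identity, while neither $\mrm{Re}\, H$ nor the factor $x$ depends on $z$, $\hat v$, or $v$. Invariance under $q^{\Z}$ is the substantive point: the generator sends $(z,\hat v, x)$ to $(\xi^{-1}_{\R} z, \xi^{-1}\hat v, qx)$, its differential on leafwise vectors is $\xi^{-1}_{\R}$, and the invariance condition $qx \cdot \mrm{Re}\, H(\xi^{-1}_{\R} \eta_1, \xi^{-1}_{\R}\eta_2) = x \cdot \mrm{Re}\, H(\eta_1,\eta_2)$ reduces, after substituting $\eta_i \mapsto \xi_{\R} \eta_i$, to the key identity
\[
H(\xi_{\R}\eta_1, \xi_{\R}\eta_2) = q \cdot H(\eta_1,\eta_2). \qquad (\star)
\]
Only the real part of $(\star)$ is needed, but the full Hermitian form falls out of the argument below.

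Establishing $(\star)$ is the main obstacle, and I would derive it from the Rosati relation for Frobenius. The non-degenerate Hermitian form $H$ corresponds to a polarization $\lambda$ of $\C^g/\Gamma$; by the second characterization of the canonical lift recalled in \S\ref{preliminaires}, both $\lambda$ and $\xi$ arise, through $\mathcal A$ and $\iota$, from a polarization $\lambda_0$ and the Frobenius $\phi_0$ of $A_0$. For any polarization of an abelian variety over $\F_q$, the $q$-th power Frobenius satisfies the classical identity $\phi_0 \circ \phi_0' = [q]$, which translates on $\Gamma$ into $\Psi(\xi\gamma_1, \xi\gamma_2) = q\Psi(\gamma_1,\gamma_2)$ for the Riemann form $\Psi = \mrm{Im}\, H|_{\Gamma}$. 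Extending by $\R$-bilinearity to $\Gamma \otimes \R = \C^g$ and using the $\C$-linearity of $\xi_{\R}$ together with $H(u,v) = \Psi(iu,v) + i\Psi(u,v)$, we recover $(\star)$. Once $\tilde g$ descends to the leafwise metric $g$, the conformal property is immediate: $\phi^t$ lifts to $(z,\hat v, x) \mapsto (z, \hat v, e^tx)$, whose differential on leafwise vectors is the identity, so $(\phi^t)^*\tilde g_{(z,\hat v,x)}(\eta_1,\eta_2) = e^t x \cdot \mrm{Re}\, H(\eta_1,\eta_2) = e^t\tilde g_{(z,\hat v, x)}(\eta_1,\eta_2)$, and this passes to the quotient as $(\phi^t)^* g = e^t g$.
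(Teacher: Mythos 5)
Your proof is correct and follows the same skeleton as the paper's: descent of $\tilde g$ reduces to the scaling $\mrm{Re}\, H(\xi_{\R}\eta_1,\xi_{\R}\eta_2)=q\,\mrm{Re}\, H(\eta_1,\eta_2)$, which both you and the paper obtain by first proving the corresponding identity for $\Psi=\mrm{Im}\, H|_{\Gamma}$ and then transferring to the real part via $\C$-linearity of $\xi_{\R}$; once this is in place, the conformal scaling under $\phi^t$ is a one-line computation. The genuinely different ingredient is how you justify the scaling of $\Psi$. The paper passes through the $l$-adic Tate module ($l\neq p$): it cites Mumford's comparison between the Riemann form and the $l$-adic Weil pairing, then uses that Frobenius multiplies the Weil pairing by $q$. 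You instead invoke the Rosati relation $\phi_0\circ\phi_0'=[q]$ and translate it on $\Gamma$. The two facts are essentially equivalent, but your route stays on the algebraic side of $A_0$ and avoids the Tate module, which is arguably tidier. One place where you are thin: the appeal to ``the second characterization of the canonical lift'' to conclude that $\lambda$ arises from a polarization $\lambda_0$ of $A_0$ runs in the wrong direction --- that characterization lifts endomorphisms of $A_0$ up to $\mathcal{A}$, it does not descend a polarization from $A(\C)$ down to $A_0$. To make the argument watertight one should either identify the N\'eron--Severi groups of $\mathcal{A}$ and $A_0$, or simply start from a polarization $\lambda_0$ of $A_0$ over $\F_q$ and take $H$ to be the Riemann form of its (unique) lift; the paper glosses over the analogous point when it asserts $\Psi_l(\xi\gamma_1,\xi\gamma_2)=q\Psi_l(\gamma_1,\gamma_2)$, so this is not a defect peculiar to your approach. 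Two smaller differences: you verify the $V$-invariance explicitly (the paper leaves it implicit), and for the conformal property you lift $\phi^t$ to $(z,\hat v,x)\mapsto(z,\hat v,e^t x)$ directly, which is a valid and cleaner shortcut once $\tilde g$ is known to descend, in place of the paper's device of rewriting $e^t x=q^{\nu}x'$.
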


\begin{proof} To prove the first statement, we need to show that for every $\nu \in \Z$ one has
$$
\mrm{Re} H(\xi_{\R}^{\nu}(\eta_1), \xi_{\R}^{\nu}(\eta_2))=q^{\nu}\mrm{Re} H (\eta_1, \eta_2).
$$
Let $l \neq p$ be a prime. Identify $\Z_l$ to $\Z_l(1)$ via the compatible system of primitive $l^n$-th roots of unity $(e^{2\pi i/l^n})$. Then the Weil pairing $$\Psi_l: T_l(A) \otimes T_l(A) \longrightarrow \Z_l(1)$$ is minus the $\Z_l$-linear extension of $\Psi$ (\cite{mumford} Thm. 1 p. 237). As a consequence, for any $\gamma_1, \gamma_2 \in \Gamma$, we have
$$
\Psi(\xi^{\nu}(\gamma_1), \xi^{\nu}(\gamma_2))=-\Psi_l(\xi^{\nu}(\gamma_1), \xi^{\nu}(\gamma_2))=-q^{\nu}\Psi_l(\gamma_1, \gamma_2)=q^{\nu}\Psi(\gamma_1. \gamma_2)
$$
This implies that $\mrm{Im} H(\xi_{\R}^{\nu}(\eta_1), \xi_{\R}^{\nu}(\eta_2))=q^{\nu} \mrm{Im} H(\eta_1, \eta_2)$ for any $\eta_1, \eta_2 \in \C^g=\Gamma \otimes \R$. As a consequence we have
\begin{eqnarray*}
\mrm{Re} H(\xi_{\R}^{\nu}(\eta_1), \xi_{\R}^{\nu}(\eta_2)) &=& \mrm{Im} H(i \xi_{\R}^{\nu}(\eta_1), \xi_{\R}^{\nu}(\eta_2))\\
&=& \mrm{Im} H(\xi_{\R}^{\nu}(i \eta_1), \xi_{\R}^{\nu}(\eta_2))\\
&=& q^{\nu} \mrm{Im} H(i \eta_1, \eta_2)\\
&=& q^{\nu} \mrm{Re} H(\eta_1, \eta_2).
\end{eqnarray*} 
Let us prove the second statement. Let $t \in \R$, $x \in \R^\times_+$ and assume that $e^tx=q^\nu x'$ for some $\nu \in \Z$ and $x' \in \R^\times_+$. Then for any $z \in \C^g$ and $\hat{v} \in V_\xi \Gamma$, we have 
$$
\phi^t([z, \hat{v}, x])=[\xi_{\R}^{\nu}(z), \xi^{\nu}(\hat{v}), x']
$$
and the tangent map $T_{[z, \hat{v}, x]}\phi^t: T_{[z, \hat{v}, x]} \mathcal{F} \longrightarrow T_{[\xi_{\R}^{\nu}(z), \xi^{\nu}(\hat{v}), x']} \mathcal{F}$ sends $\eta \in \C^g$ to $\xi^{\nu}(\eta)$. As a consequence
\begin{eqnarray*}
(\phi^{t*}g)(\eta_1, \eta_2) &=& x'\re H(T_{[z, \hat{v}, x]}\phi^t(\eta_1), T_{[z, \hat{v}, x]}\phi^t(\eta_2))\\
&=& x' \re H(\xi_{\R}^{\nu}(\eta_1), \xi_{\R}^{\nu}(\eta_2))\\
&=& q^{\nu}x' \re H(\eta_1, \eta_2)\\
&=& e^tg(\eta_1, \eta_2).
\end{eqnarray*}
\end{proof}

\begin{lem} \label{basis} There exists an orthonormal basis $
\left( \frac{\partial}{\partial x_1}, \frac{\partial}{\partial y_1}, \ldots, \frac{\partial}{\partial x_{g}}, \frac{\partial}{\partial y_{g}} \right)$ of $(\Gamma \otimes \R, \re H)$ such that the vectors $\frac{\partial}{\partial z_j}=\frac{\partial}{\partial x_j}+\frac{\partial}{\partial y_j} \otimes i \in \Gamma \otimes \C$ are eigenvectors of $\xi_{\C}=\xi \otimes \id_{\C}$.
\end{lem}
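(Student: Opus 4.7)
The plan is to show that $\xi_{\R}$, viewed as a $\C$-linear operator on $\C^{g}$, is normal with respect to the Hermitian form $H$, apply the spectral theorem to diagonalize it in an $H$-orthonormal basis, and translate the result back into a real orthonormal frame for $(\Gamma \otimes \R, \re H)$.

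For the normality, I would observe that the proof of Proposition \ref{metrique} already establishes both $\re H(\xi_{\R} \eta_{1}, \xi_{\R} \eta_{2}) = q \, \re H(\eta_{1}, \eta_{2})$ and $\mrm{Im} H(\xi_{\R} \eta_{1}, \xi_{\R} \eta_{2}) = q \, \mrm{Im} H(\eta_{1}, \eta_{2})$ (its case $\nu = 1$). Hence $H(\xi_{\R} \eta_{1}, \xi_{\R} \eta_{2}) = q \, H(\eta_{1}, \eta_{2})$, or equivalently $\xi_{\R}^{*} \xi_{\R} = q \cdot \mrm{id}$ where $*$ denotes the $H$-adjoint. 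Since $\xi_{\R}$ is invertible by (\ref{determinant}), this yields $\xi_{\R}^{*} = q \, \xi_{\R}^{-1}$, so $\xi_{\R}$ commutes with its adjoint and is therefore normal.

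I would then apply the spectral theorem for normal operators on a finite dimensional Hermitian space to obtain an $H$-orthonormal basis $(e_{1}, \dots, e_{g})$ of $\C^{g}$ together with scalars $\lambda_{1}, \dots, \lambda_{g} \in \C$ with $\xi_{\R}(e_{j}) = \lambda_{j} e_{j}$. Setting $\partial/\partial x_{j} := e_{j}$ and $\partial/\partial y_{j} := i e_{j}$, where $i$ acts through the intrinsic complex structure on $\C^{g}$, the standard sesquilinearity relations $H(iu,v) = i H(u,v)$ and $H(u,iv) = -i H(u,v)$ immediately give $\re H$-orthonormality of the real frame $(\partial/\partial x_{j}, \partial/\partial y_{j})_{1 \leq j \leq g}$ in $\Gamma \otimes \R$.

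It remains to verify the eigenvector condition. I would write $\lambda_{j} = a_{j} + i b_{j}$ with $a_{j}, b_{j} \in \R$, expand $\xi_{\R}(e_{j}) = a_{j} e_{j} + b_{j}(i e_{j})$ and $\xi_{\R}(i e_{j}) = -b_{j} e_{j} + a_{j}(i e_{j})$ in the real basis $(e_{j}, i e_{j})$, and then apply $\xi_{\C} = \xi \otimes \mrm{id}_{\C}$ termwise in $\Gamma \otimes_{\Z} \C$; collecting coefficients gives $\xi_{\C}(\partial/\partial x_{j} + \partial/\partial y_{j} \otimes i) = \overline{\lambda_{j}}(\partial/\partial x_{j} + \partial/\partial y_{j} \otimes i)$. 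The only genuinely delicate point of the argument is keeping track of the two distinct complex structures on $\Gamma \otimes \C$ --- the intrinsic one coming from $A(\C)$ and the external one from the tensor factor $\C$ --- whose interplay is precisely what produces the conjugate eigenvalue $\overline{\lambda_{j}}$ rather than $\lambda_{j}$.
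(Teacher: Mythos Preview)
Your proof is correct and takes a genuinely different route from the paper's. The paper works on the \emph{real} inner product space $(\Gamma\otimes\R,\re H)$: it observes that $q^{-1/2}\xi_{\R}$ is orthogonal there, applies the real spectral theorem to get a block-diagonal form with $1\times 1$ blocks $(\pm 1)$ and $2\times 2$ rotation blocks, and then must \emph{exclude} the $1\times 1$ blocks by invoking Deligne's result that $\pm q^{1/2}$ is never an eigenvalue of Frobenius on an ordinary abelian variety. Only then does each $2\times 2$ block $(v_1,v_2)$ yield an eigenvector $v_1+v_2\otimes i$ of $\xi_{\C}$.

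Your approach instead exploits that $\xi_{\R}$ is $\C$-linear for the intrinsic complex structure and passes to the \emph{complex} Hermitian space $(\C^g,H)$. The identity $H(\xi_{\R}\eta_1,\xi_{\R}\eta_2)=qH(\eta_1,\eta_2)$ makes $\xi_{\R}$ normal, the complex spectral theorem diagonalises it in an $H$-orthonormal basis $(e_j)$, and the pair $(e_j,ie_j)$ automatically gives the desired real orthonormal frame. This is cleaner: it needs no case analysis and no appeal to Deligne. Your careful bookkeeping with the two complex structures, leading to the eigenvalue $\overline{\lambda_j}$ for $\partial/\partial z_j$, is exactly right and is the one place where a reader could stumble. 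The paper's argument, on the other hand, makes the connection to the Weil-number property of Frobenius more visible, and its block form is perhaps more concrete for readers less comfortable with Hermitian linear algebra.
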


\begin{proof} We have explained in the proof of the lemma above that $q^{-\frac{1}{2}} \xi_{\R}$ is an orthogonal automorphism of $(\Gamma \otimes \R, \re H)$. By elementary linear algebra, there exists an orthonormal basis $
\left( \frac{\partial}{\partial x_1}, \frac{\partial}{\partial y_1}, \ldots, \frac{\partial}{\partial x_{g}}, \frac{\partial}{\partial y_{g}} \right)$ of $(\Gamma \otimes \R, \re H)$ in which the matrix of $q^{-\frac{1}{2}} \xi_{\R}$ is diagonal by blocks of size one of the form $(\pm 1)$ and of size two of the form $\begin{pmatrix}
\cos \theta & -\sin \theta\\
\sin \theta & \cos \theta \\
\end{pmatrix}$. In fact, the blocks of size one do not occur because $\pm q^{\frac{1}{2}}$ is not an eigenvalue of $\xi$ by \cite{deligne} proof of Thm. 7. (A). Then the basis $
\left( \frac{\partial}{\partial x_1}, \frac{\partial}{\partial y_1}, \ldots, \frac{\partial}{\partial x_{g}}, \frac{\partial}{\partial y_{g}} \right)$ satisfies the statement of the lemma.
\end{proof}

The following results will be useful later: let us denote by $\Gamma_p$ the $p$-adic Tate module $\Gamma \otimes \Z_p$. According to \cite{deligne} 6.1 and 6.2, we have a decomposition into $\Z_p$-modules $\Gamma_p=\Gamma'_p \oplus \Gamma''_p$ such that $\xi(\Gamma'_p)=\Gamma'_p$ and $\xi(\Gamma''_p)=q \Gamma''_p$. It follows from (\ref{determinant}) that $\Gamma/\xi^\nu \Gamma$ is a finite abelian $p$-group for every $\nu \geq 0$ and hence that $T_{\xi} \Gamma=\underleftarrow{\lim}_\nu \Gamma_p/\xi^\nu \Gamma_p$.  As a consequence, we have 
\begin{equation} \label{technique2}
T_\xi \Gamma=\underleftarrow{\lim}_\nu \Gamma''_p/\xi^\nu \Gamma''_p= \underleftarrow{\lim}_\nu \Gamma''_p/q^\nu \Gamma''_p=\Gamma''_p.
\end{equation}
We define the quotient space $\C^g \times_{\Gamma} T_\xi \Gamma$ similarly as $\C^g \times_V V_\xi \Gamma$.

\begin{lem} \label{technique} The natural inclusion $\C^g \times T_\xi \Gamma \longrightarrow \C^g \times V_\xi \Gamma$ induces a canonical isomorphism $\C^g \times_\Gamma T_\xi \Gamma \simeq \C^g \times_V V_\xi \Gamma$ which is equivariant with respect to the diagonal action of $\xi$ on both sides. 
\end{lem}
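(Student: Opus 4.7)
The plan is to reduce the claim to an auxiliary identification $V/\Gamma \simeq V_\xi \Gamma / T_\xi \Gamma$ induced by the natural map $\psi : V \to V_\xi \Gamma$ sending $\xi^{-\nu}_\R \gamma$ to $\xi^{-\nu}$ times the image of $\gamma$ in $T_\xi \Gamma$, and then to read off the bijectivity of $f : \C^g \times_\Gamma T_\xi \Gamma \to \C^g \times_V V_\xi \Gamma$ from this. Equivariance under the diagonal $\xi$-action will then be automatic, since $\xi$ preserves both $\Gamma$ and $V$ and the inclusion $\C^g \times T_\xi \Gamma \hookrightarrow \C^g \times V_\xi \Gamma$ is visibly $\xi$-equivariant before passing to quotients.

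For the auxiliary step, I would write $V/\Gamma = \underrightarrow{\lim}_\nu \xi^{-\nu}_\R \Gamma / \Gamma$ and use multiplication by $\xi^\nu_\R$ to identify this with $\underrightarrow{\lim}_\nu \Gamma / \xi^\nu \Gamma$. Each quotient $\Gamma / \xi^\nu \Gamma$ is finite of order $\det(\xi)^\nu = q^{\nu g}$ by (\ref{determinant}), hence a $p$-group, so $\Gamma / \xi^\nu \Gamma \simeq \Gamma_p / \xi^\nu \Gamma_p$. Plugging in the Deligne decomposition $\Gamma_p = \Gamma'_p \oplus \Gamma''_p$ recalled just before the statement, together with the fact that $\xi$ is an automorphism of $\Gamma'_p$ and satisfies $\xi(\Gamma''_p) = q \Gamma''_p$, the first summand vanishes and one is left with $\Gamma''_p / q^\nu \Gamma''_p$. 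Passing to the colimit and invoking (\ref{technique2}) yields a canonical isomorphism $V / \Gamma \simeq V_\xi \Gamma / T_\xi \Gamma$ induced by $\psi$; in particular $V \to V_\xi \Gamma / T_\xi \Gamma$ is surjective with kernel exactly $\Gamma$.

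With this in hand the lemma follows directly. For surjectivity of $f$, given a class $[z, \hat{v}] \in \C^g \times_V V_\xi \Gamma$, surjectivity of $V \to V_\xi \Gamma / T_\xi \Gamma$ produces $v \in V$ with $\hat{v} - \psi(v) \in T_\xi \Gamma$, so $v.(z, \hat{v}) = (z+v, \hat{v} - \psi(v))$ lies in $\C^g \times T_\xi \Gamma$ and represents $[z, \hat{v}]$. For injectivity, if $(z_1, t_1)$ and $(z_2, t_2)$ in $\C^g \times T_\xi \Gamma$ are $V$-equivalent via some $v \in V$, then $\psi(v) = t_1 - t_2$ lies in $T_\xi \Gamma$, so $v$ is in the kernel of $V \to V_\xi \Gamma / T_\xi \Gamma$, which is $\Gamma$; hence the two points are already $\Gamma$-equivalent.

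The main obstacle is the auxiliary step: all the arithmetic input specific to ordinary abelian varieties, namely the Deligne decomposition, the identification $T_\xi \Gamma = \Gamma''_p$ from (\ref{technique2}), and the action of $\xi$ as multiplication by $q$ (up to an automorphism) on that piece, is consumed there. Without it the match between the $\xi$-adic filtration on $\Gamma$ and the $q$-adic filtration on $\Gamma''_p$ would not go through, and the resulting isomorphism $V/\Gamma \simeq V_\xi \Gamma / T_\xi \Gamma$ would fail.
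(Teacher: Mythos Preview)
Your argument is correct. The auxiliary identification $V/\Gamma \simeq V_\xi\Gamma/T_\xi\Gamma$ via the colimit $\underrightarrow{\lim}_\nu \Gamma/\xi^\nu\Gamma$ is sound: the transition maps become multiplication by $\xi$ after the reindexing, and since $\xi$ acts on $\Gamma''_p$ as $q$ times an automorphism, the resulting system is cofinal in the one computing $\Gamma''_p[1/p]/\Gamma''_p = V_\xi\Gamma/T_\xi\Gamma$. Reading off surjectivity and injectivity of $f$ from this is then routine, and $\xi$-equivariance is automatic.

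The paper takes a different, more hands-on route. Rather than identifying $V/\Gamma$ with $V_\xi\Gamma/T_\xi\Gamma$ abstractly, it argues directly for surjectivity: given $\hat v\in V_\xi\Gamma$, density of $\Gamma[1/p]$ in $V_\xi\Gamma$ (which factors through $\Gamma''_p[1/p]$) produces $p^{-\alpha}\gamma$ with $\gamma\in\Gamma$ and $\hat v-p^{-\alpha}\gamma\in T_\xi\Gamma$; one then checks, using $\xi(\Gamma''_p)=q\Gamma''_p$, that $p^{-\alpha}\gamma$ actually lies in $V$. Injectivity is left implicit. Both proofs consume the same arithmetic input (the Deligne decomposition and the identification $T_\xi\Gamma=\Gamma''_p$ of (\ref{technique2})), but your colimit formulation packages it more cleanly and handles injectivity without extra work, whereas the paper's argument is shorter but leaves more to the reader.
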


\begin{proof} The natural inclusion $\Gamma[1/p] \longrightarrow V_\xi \Gamma$, which is dense, factors through the inclusion $\Gamma''_p[1/p] \longrightarrow V_\xi \Gamma$. Hence, for any $\hat{v} \in V_\xi \Gamma$ there exists $p^{-\alpha} \gamma$, where $\alpha \in \Z$ and $\gamma \in \Gamma$ such that $\hat{v}-p^{-\alpha} \gamma \in T_\xi \Gamma$ and we can assume that $\gamma \in \Gamma \cap \Gamma''_p$. Hence we have $\xi_{\R}^\alpha(p^{-\alpha} \gamma) \in \Gamma$ which means that $p^{-\alpha} \gamma \in V$. This concludes the proof.
\end{proof}

\begin{cor} \label{compact} The topological space $S(A_0)$ is compact.
\end{cor}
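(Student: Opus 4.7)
The plan is to reduce the question to checking that a product of manifestly compact spaces surjects onto $S(A_0)$, using Lemma \ref{technique} to replace the non-compact transversal $V_\xi \Gamma$ (a $\Q_p$-vector space acted on by the non-discrete group $V$) by the profinite $T_\xi \Gamma$ with an action by the discrete lattice $\Gamma$.

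First I would apply Lemma \ref{technique} to rewrite
$$
S(A_0) = (\C^g \times_\Gamma T_\xi \Gamma) \times_{q^{\Z}} \R^\times_+,
$$
where $\Gamma$ now acts on $\C^g \times T_\xi \Gamma$ by $\gamma\cdot(z,\hat v)=(z+\gamma,\hat v-\gamma)$ and $q^{\Z}$ acts on the right-hand factor in the usual way. By formula (\ref{technique2}), $T_\xi \Gamma = \Gamma''_p$ is a finitely generated $\Z_p$-module, hence compact in its $p$-adic topology.

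Next I would produce explicit compact fundamental domains. Since $\Gamma\subset \C^g$ is a lattice of rank $2g$, choose a compact fundamental parallelepiped $D\subset\C^g$ for the translation action of $\Gamma$; then $D$ is compact and its image under the projection $\C^g\to \C^g/\Gamma$ is surjective. The projection
$$
D \times T_\xi \Gamma \longrightarrow \C^g\times_\Gamma T_\xi \Gamma
$$
is then continuous and surjective (given any class $[z,\hat v]$, translate $z$ by the unique $\gamma\in\Gamma$ bringing it into $D$ and adjust $\hat v$ accordingly), so $\C^g \times_\Gamma T_\xi \Gamma$ is the continuous image of a compact space and is compact. Likewise, $[1,q]$ is a compact fundamental domain for the $q^{\Z}$-action on $\R^\times_+$, so
$$
(\C^g \times_\Gamma T_\xi \Gamma)\times [1,q] \longrightarrow S(A_0)
$$
is continuous and surjective, exhibiting $S(A_0)$ as a continuous image of a compact space.

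There is no serious obstacle: the only conceptual step is Lemma \ref{technique}, which has already been proven, and the rest is the standard fundamental-domain argument. The point worth stressing is that without the identification $\C^g\times_V V_\xi \Gamma \simeq \C^g\times_\Gamma T_\xi \Gamma$ one would be trying to quotient a non-compact space by a non-discrete group, and compactness would not be transparent; with the identification, $S(A_0)$ becomes a mapping torus (by the $q^{\Z}$-action) of a bundle of profinite groups over a compact complex torus, which is visibly compact.
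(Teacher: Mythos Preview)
Your argument is correct and follows essentially the same route as the paper: both use Lemma \ref{technique} to replace $V_\xi\Gamma$ by the compact $T_\xi\Gamma$ and then deduce compactness from that of $\C^g/\Gamma$, $T_\xi\Gamma$, and $\R^\times_+/q^{\Z}$. The only cosmetic difference is that the paper phrases the last step as a locally trivial fibration with compact base and fibre, whereas you use explicit fundamental domains; these are interchangeable.
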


\begin{proof} The natural projection $\C^g \times_{\Gamma} T_{\xi} \Gamma \longrightarrow \C^g/\Gamma$ is locally trivial with compact fiber and compact target. This implies that $\C^g \times_\Gamma T_\xi \Gamma$ is compact. Similarly, we have that $(\C^g \times_{\Gamma} T_\xi \Gamma) \times_{q^{\Z}} \R^\times_+$ is compact. But this space is nothing but $S(A_0)$ by the previous lemma.
\end{proof}

\section{$L^2$-harmonic forms along the leaves}

In this section, we work with a fixed orthonormal basis of $(\C^g, \re H)$ given by Lem. \ref{basis} and we denote by $(dx_1, dy_1, \ldots, dx_g, dy_g)$ the dual basis.

\begin{lem} \label{mesure} Let $\mu_\xi$ be a Haar measure on the locally compact abelian group $V_\xi \Gamma$.\\
\indent $(i).$ The measure $\prod_{j=1}^g dx_j dy_j \otimes \mu_\xi \otimes \frac{dx}{x}$ induces a measure $\mu$ on $S(A_0)$.\\
\indent $(ii).$ The measure $\mu$ is invariant under the action of $\phi^t$.
\end{lem}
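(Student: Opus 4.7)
The plan is to construct $\mu$ by descending the product measure $\tilde\mu = \prod_{j=1}^g dx_j\,dy_j \otimes \mu_\xi \otimes dx/x$ on $\C^g \times V_\xi\Gamma \times \R^\times_+$ through the $V$- and $q^\Z$-actions, and then to check $\phi^t$-invariance on the $\R^\times_+$-factor. First, $V$ acts on the first two factors by the diagonal translation $v\mapsto (v,-v)$, so $\tilde\mu$ is $V$-invariant, since Lebesgue measure on $\C^g$ and Haar measure $\mu_\xi$ on $V_\xi\Gamma$ are each translation-invariant. Although $V$ is dense in $\C^g$ — so one cannot invoke proper discontinuity directly — the isomorphism of Lemma \ref{technique} rewrites the quotient $\C^g\times_V V_\xi\Gamma$ as $\C^g\times_\Gamma T_\xi\Gamma$, on which the lattice $\Gamma$ acts freely and properly, and the restriction of $\tilde\mu$ descends.

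Next I would verify $q^\Z$-invariance by a Jacobian computation for the generator $F(z,\hat v,x)=(\xi_\R^{-1}z,\xi^{-1}\hat v,qx)$, with three independent contributions. On $\C^g\cong\R^{2g}$, equation (\ref{determinant}) gives $|\det_\R \xi_\R^{-1}|=q^{-g}$. On $V_\xi\Gamma\cong\Q_p^g$, obtained from (\ref{technique2}) and the ordinary-Frobenius decomposition $\Gamma_p=\Gamma'_p\oplus\Gamma''_p$ with $\xi(\Gamma''_p)=q\Gamma''_p$, the map $\xi$ is $q$ times a $\Z_p$-automorphism, so the module of $\xi^{-1}$ on Haar measure equals $|\det\xi|_p^{-1}=q^g$. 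On $\R^\times_+$, the form $dx/x$ is invariant under $x\mapsto qx$ (it is the multiplicative Haar measure). The product of Jacobians is $q^{-g}\cdot q^g\cdot 1=1$, so $F^*\tilde\mu=\tilde\mu$, and combined with $V$-invariance, $\tilde\mu$ descends to the desired measure $\mu$ on $S(A_0)$.

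Part (ii) is then immediate: $\phi^t$ is induced on the covering by $(z,\hat v,x)\mapsto(z,\hat v,e^tx)$, acting nontrivially only on $\R^\times_+$ via the dilation $x\mapsto e^tx$, and $dx/x$ is the multiplicatively-invariant Haar measure. The main obstacle is the module computation on $V_\xi\Gamma$: it hinges on the ordinary structure from Section \ref{preliminaires} to identify $V_\xi\Gamma$ with $\Q_p^g$ and to write $\xi$ as $q$ times a $\Z_p$-unit, thereby forcing the $p$-adic Jacobian to cancel the archimedean one exactly — a cancellation reminiscent of a product formula, and the structural reason $\tilde\mu$ descends.
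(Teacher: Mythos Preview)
Your argument is correct and follows the same overall structure as the paper's proof: descent through the $V$-action is by translation-invariance, descent through the $q^{\Z}$-action is by a Jacobian cancellation between the $\C^g$ factor (contributing $q^{-g}$) and the $V_\xi\Gamma$ factor (contributing $q^g$), with $dx/x$ invariant on $\R^\times_+$; part (ii) is then immediate.

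The one genuine difference is in how you compute the module of $\xi$ on $V_\xi\Gamma$. You invoke the ordinary decomposition $\Gamma_p=\Gamma'_p\oplus\Gamma''_p$ and the identification $T_\xi\Gamma=\Gamma''_p\cong\Z_p^g$ from (\ref{technique2}), then use $\xi|_{\Gamma''_p}=q\cdot(\text{unit})$ to get $|\det\xi|_p^{-1}=q^g$. The paper instead argues directly: since $A\mapsto\mu_\xi(\xi^\nu A)$ is again a Haar measure, the scaling constant is determined by evaluating at $A=T_\xi\Gamma$, and $|T_\xi\Gamma/\xi^\nu T_\xi\Gamma|=|\Gamma/\xi^\nu\Gamma|=\det(\xi^\nu)=q^{g\nu}$ by (\ref{determinant}). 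The paper's counting argument is slightly more self-contained---it uses only $\det\xi=q^g$ and not the ordinary splitting---while your route makes the ``product formula'' mechanism (archimedean versus $p$-adic absolute values of $\det\xi$) more transparent. Both are valid.
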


\begin{proof} (i). We need to check that for any $\nu \in \Z$ and any Borel subset $A$ of $V_\xi \Gamma$ we have
$$
\mu_{\xi}(\xi^{\nu}(A))=|\det{\xi}^{\nu}|^{-1}\mu_{\xi}(A)=q^{-g\nu}\mu_{\xi}(A).
$$
As $A \longmapsto \mu_{\xi}(\xi^{\nu}(A))$ is also a Haar measure on $V_{\xi} \Gamma$, it is enough to verify the above equality for $A=T_{\xi} \Gamma$. But it follows from the fact that, for any $\nu \geq 0$, we have $T_\xi \Gamma/\xi^\nu T_\xi \Gamma=\Gamma/\xi^\nu \Gamma$ has $\det(\xi^\nu)=q^{g\nu}$ elements. (ii). Trivial.
\end{proof}

For $0 \leq j \leq 2g$ let $\mathcal{A}^j_{\mathcal{F}}(S(A_0))$ be the $\R$-vector space of sections of the vector bundle $\bigwedge^j T^*\mathcal{F}$ which are smooth along the $\mathcal{F}$-leaves and continuous transversally (see \cite{moore-schochet} III Def. 3.2). Denote by $$d^j_{\mathcal{F}}: \mathcal{A}^j_{\mathcal{F}}(S(A_0)) \longrightarrow \mathcal{A}^{j+1}_{\mathcal{F}}(S(A_0))$$ the leafwise exterior derivative. These form the de Rham complex along $\mathcal{F}$
$$
\begin{CD}
0 @>>> \mathcal{A}^0_{\mathcal{F}}(S(A_0)) @>>> \ldots @>>> \mathcal{A}^{2g}_{\mathcal{F}}(S(A_0)) @>>> 0.
\end{CD}
$$
The metric $g$ induces a metric $g_j$ on the vector bundle $\bigwedge^j T^*\mathcal{F}$ in the following standard way: if $e_1, \ldots, e_{2g}$ is an orthonormal basis of $T_{[z, \hat{v}, x]} \mathcal{F}$ and $e_1^*, \ldots, e_{2g}^*$ is the dual basis, then the vectors $e_{i_1}^* \wedge \ldots \wedge e_{i_j}^*$ for $1 \leq i_1 < \ldots < i_j \leq 2g$ form an orthonormal basis of $\bigwedge^j T^*_{[z, \hat{v}, x]}\mathcal{F}$. Because $S(A_0)$ is compact by Cor. \ref{compact} we can define the scalar product on $\mathcal{A}^j_{\mathcal{F}}(S(A_0))$ by
$$
\langle \omega, \omega' \rangle=\int_{S(A_0)} g_j(\omega, \omega') d\mu.
$$
Let $d_{\mathcal{F}}^{j\dagger}: \mathcal{A}^j_{\mathcal{F}}(S(A_0)) \longrightarrow \mathcal{A}^{j-1}_{\mathcal{F}}(S(A_0))$ be the formal adjoint of $d^j_{\mathcal{F}}$. We also denote as usual by $\Delta^j_\mathcal{F}=d^j_\mathcal{F}d^{j \dagger}_\mathcal{F}+d^{j \dagger}_\mathcal{F} d^j_\mathcal{F}$ the Laplacian.

\begin{lem} \label{formulae} Let 
$$
\omega=\sum_{1 \leq i_1< \ldots < i_j \leq 2g} \alpha_{i_1 \ldots i_j} du_{i_1} \wedge \ldots \wedge du_{i_j} \in  \mathcal{A}^j_{\mathcal{F}}(S(A_0)).
$$
Then
\begin{eqnarray*}
d_{\mathcal{F}}^j \omega &=& \sum_{1 \leq i_1 < \ldots < i_{j+1} \leq 2g} \sum_{1 \leq k \leq j+1}(-1)^{k-1} \frac{\partial \alpha_{i_1 \ldots \hat{i}_k \ldots i_{j+1}}}{\partial u_{i_k}} du_{i_1} \wedge \ldots \wedge du_{i_{j+1}},\\
d_{\mathcal{F}}^{j\dagger} \omega &=& -x^{-1} \sum_{1 \leq i_1< \ldots < i_j \leq 2g} \sum_{1 \leq k \leq j}(-1)^{k-1}\frac{\partial \alpha_{i_1 \ldots i_j}}{\partial u_{i_k}} du_{i_1} \wedge \ldots \wedge \hat{du}_{i_k} \ldots \wedge du_{i_j},\\
\Delta^j_\mathcal{F} \omega &=& -x^{-1} \sum_{1 \leq i_1< \ldots < i_j \leq 2g} \sum_{1 \leq k \leq j} \frac{\partial^2 \alpha_{i_1 \ldots i_j}}{\partial u_k^2} du_{i_1} \wedge \ldots \wedge du_{i_j}.
\end{eqnarray*}
\end{lem}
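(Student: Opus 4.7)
The first formula is essentially tautological: in the leafwise-flat coordinates $(u_1,\ldots,u_{2g})$ induced by the orthonormal frame of Lem.~\ref{basis}, each $du_i$ is a constant-coefficient $1$-form along the leaves, so $d_\mathcal{F}(du_{i_1}\wedge\ldots\wedge du_{i_j})=0$. Applying the Leibniz rule to $\omega=\sum_I \alpha_I du_I$ and then reindexing the resulting expression by the target multi-index $(i_1<\ldots<i_{j+1})$ immediately produces the stated formula.

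For $d^{j\dagger}_\mathcal{F}$ I would compute the formal adjoint directly. First I record the metric data: since the leaf metric is $g=x\cdot\mrm{Re}\,H$ and the frame $\partial/\partial u_i$ is $\mrm{Re}\,H$-orthonormal, the dual $1$-forms satisfy $\langle du_i, du_k\rangle_{g}=x^{-1}\delta_{ik}$, hence $\langle du_I, du_{I'}\rangle_{g_j}=x^{-j}\delta_{II'}$ for $|I|=|I'|=j$, so that
$$
\langle \omega,\omega'\rangle \;=\; \int_{S(A_0)} x^{-j}\sum_I \alpha_I \alpha'_I \, d\mu.
$$
Next, for $\eta=\sum_J \beta_J du_J\in \mathcal{A}^{j-1}_\mathcal{F}$, I expand $\langle d_\mathcal{F}\eta,\omega\rangle$ via the step-one formula and integrate by parts in each leaf direction $u_{i_k}$. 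The product decomposition $\prod du_i\otimes d\mu_\xi\otimes dx/x$ of the premeasure from Lem.~\ref{mesure}, combined with compactness of $S(A_0)$ from Cor.~\ref{compact} and a partition-of-unity argument, makes this legitimate and kills boundary terms; since $x$ is $u_{i_k}$-independent, only the prefactor $x^{-j}$ is differentiated. Comparing with $\langle \eta, d^{j\dagger}_\mathcal{F}\omega\rangle=\int x^{-(j-1)}\sum_J \beta_J (d^{j\dagger}_\mathcal{F}\omega)_J \, d\mu$ and invoking uniqueness of the formal adjoint reads off the stated formula, with the surviving factor $x^{-j}/x^{-(j-1)}=x^{-1}$ accounting for the difference in $j$-form versus $(j-1)$-form normalizations.

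For the Laplacian, the cleanest route is to observe that $d_\mathcal{F}$ coincides with the Euclidean exterior derivative $d_0$ on each leaf in the coordinates $u_i$, while step two identifies $d^{j\dagger}_\mathcal{F}=x^{-1}\delta_0$, where $\delta_0$ is the Euclidean codifferential. Since $x$ is a transverse coordinate, $d_0$ commutes with $x^{-1}$, and therefore
$$
\Delta^j_\mathcal{F} \;=\; d_\mathcal{F} d^{j\dagger}_\mathcal{F} + d^{j\dagger}_\mathcal{F} d_\mathcal{F} \;=\; x^{-1}(d_0\delta_0+\delta_0 d_0).
$$
The classical Bochner identity on flat Euclidean space yields $(d_0\delta_0+\delta_0 d_0)(\alpha_I du_I)=-\sum_k \partial^2_{u_k}\alpha_I\, du_I$, which gives the third formula after multiplication by $x^{-1}$.

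The only step requiring genuine care is the integration by parts: one must confirm that leaf-direction derivatives integrate by parts against the global measure $\mu$ without producing any transverse residues. This is exactly why the explicit product form of $\mu$ supplied by Lem.~\ref{mesure} is indispensable; granted that decomposition, the rest of the argument is bookkeeping on signs and on the $x$-scaling of the metric.
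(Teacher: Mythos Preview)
Your argument is correct and follows the same route as the paper: the paper also dismisses the first formula as standard, proves the adjoint formula by expanding $\langle \omega, d^{j-1}_{\mathcal{F}}\eta\rangle$ with the identity $\langle du_I,du_{I'}\rangle_{g_j}=x^{-j}\delta_{II'}$ and integrating by parts, and leaves the Laplacian to the reader, for which your factorization $\Delta^j_{\mathcal{F}}=x^{-1}(d_0\delta_0+\delta_0 d_0)$ is a clean way to finish. One phrasing slip: in your adjoint paragraph you presumably mean that the prefactor $x^{-j}$ is \emph{not} differentiated (it is constant in the leaf variables and passes through $\partial_{u_{i_k}}$), which is precisely what yields the residual factor $x^{-j}/x^{-(j-1)}=x^{-1}$.
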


\begin{proof} The first statement follows from an easy standard computation. To prove the second statement let 
$$
\eta=\sum_{1 \leq i_1<\ldots<i_{j-1}  \leq 2g}\beta_{i_1 \ldots i_{j-1} } du_{i_1} \wedge \ldots \wedge du_{i_{j-1}} \in \mathcal{A}^{j-1}_{\mathcal{F}}(S(A_0)).
$$
Then, one has
$$
\langle d_{\mathcal{F}}^{j\dagger} \omega, \eta \rangle = \langle \omega, d^{j-1}_{\mathcal{F}} \eta \rangle
$$
\begin{eqnarray*}
&=& \sum_{1 \leq i_1< \ldots < i_j \leq 2g} \sum_{1 \leq k \leq j}(-1)^{k-1} \langle \alpha_{i_1 \ldots i_j} du_{i_1} \wedge \ldots \wedge du_{i_j},  \frac{\partial \beta_{i_1 \ldots \hat{i}_k \ldots i_{j}}}{\partial u_{i_k}} du_{i_1} \wedge \ldots \wedge du_{i_{j}} \rangle\\
&=& \sum_{1 \leq i_1< \ldots < i_j \leq 2g} \sum_{1 \leq k \leq j}(-1)^{k-1} \int_{S(A_0)} x^{-j} \alpha_{i_1 \ldots i_j}\frac{\partial \beta_{i_1 \ldots \hat{i}_k \ldots i_{j}}}{\partial u_{i_k}} d\mu\\
&=& -\sum_{1 \leq i_1< \ldots < i_j \leq 2g} \sum_{1 \leq k \leq j}(-1)^{k-1} \int_{S(A_0)} x^{-j} \frac{\partial \alpha_{i_1 \ldots i_j}}{\partial u_{i_k}} \beta_{i_1 \ldots \hat{i}_k \ldots i_{j}} d\mu \\
&=&  \left \langle -x^{-1} \sum_{1 \leq i_1< \ldots < i_j \leq 2g} \sum_{1 \leq k \leq j}(-1)^{k-1}\frac{\partial \alpha_{i_1 \ldots i_j}}{\partial u_{i_k}} du_{i_1} \wedge \ldots \wedge \hat{du}_{i_k} \ldots \wedge du_{i_j}, \eta \right \rangle
\end{eqnarray*}
which proves the statement. The proof of the last statement is left to the reader.
\end{proof}

Let $\mathcal{A}^j_{\mathcal{F}, L^2}(S(A_0))$ be the $L^2$-completion of $\mathcal{A}^j_{\mathcal{F}}(S(A_0))$, view $d_{\mathcal{F}}^j$ as an unbounded operator on $\mathcal{A}^j_{\mathcal{F}, L^2}(S(A_0))$ and define $\tilde{d}_{\mathcal{F}}^j$ to be the closed unbounded operator $\tilde{d}_{\mathcal{F}}^j={d}_{\mathcal{F}}^{j\dagger *}$. Define
$$
\mrm{Harm}^j_{L^2}(S(A_0))=\ker \tilde{d}^j_{\mathcal{F}} \cap \ker \tilde{d}_{\mathcal{F}}^{j*}.
$$
This is a Hilbert space, as a closed subspace of $\mathcal{A}^j_{\mathcal{F}, L^2}(S(A_0))$.\\

We introduce the following notation:
\begin{eqnarray*}
\mathcal{A}^j_{\mathcal{F}, L^2}(S(A_0))_{\C} &=& \mathcal{A}^j_{\mathcal{F}, L^2}(S(A_0)) \otimes_{\R} \C,\\
\mrm{Harm}^j_{L^2}(S(A_0))_{\C} &=& \mrm{Harm}^j_{L^2}(S(A_0)) \otimes_{\R} \C.
\end{eqnarray*}
Let us denote by $(d\tau_1, \ldots, d\tau_{2g})$ the basis of $(\Gamma \otimes \C)^*$ which is dual to the basis $(\frac{\partial}{\partial z_1}, \frac{\partial}{\partial \overline{z}_1}, \ldots, \frac{\partial}{\partial z_g}, \frac{\partial}{\partial \overline{z}_g})$ (see Lem. \ref{basis}). We endow $\mrm{Harm}^j_{L^2}(S(A_0))_{\C}$ with the hermitian scalar product induced by the scalar product on $\mrm{Harm}^j_{L^2}(S(A_0))$. Note that any element $\omega$ of $\mathcal{A}^j_{\mathcal{F}, L^2}(S(A_0))_{\C}$ is written as 
$$
\omega=\sum_{1 \leq i_1< \ldots < i_j \leq 2g} \alpha_{i_1 \ldots i_j} d\tau_{i_1} \wedge \ldots \wedge d\tau_{i_j}
$$
where $\alpha_{i_1 \ldots i_j}$ are $\C$-valued $L^2_{loc}$ functions on $\C^g \times V_{\xi} \Gamma \times \R^\times_+$ satisfying the following invariance properties:
\begin{eqnarray}
\alpha_{i_1 \ldots i_j}(z+v, \hat{v}-v, x) &=& \alpha_{i_1 \ldots i_j}(z, \hat{v},x),\\
\alpha_{i_1 \ldots i_j}(z, \hat{v},q^\nu x) d\tau_{i_1} \wedge \ldots \wedge d\tau_{i_j} &=& \alpha_{i_1 \ldots i_j}(z, \hat{v},x) \xi^{\nu *}_{\C}(d\tau_{i_1} \wedge \ldots \wedge d\tau_{i_j})
\end{eqnarray}
The proof of the following result is inspired from the beginning of the one of \cite{deninger} Thm. 4.1. 

\begin{lem} \label{harmo} Let
$$
\omega=\sum_{1 \leq i_1< \ldots < i_j \leq 2g} \alpha_{i_1 \ldots i_j} d\tau_{i_1} \wedge \ldots \wedge d\tau_{i_j} \in \mathcal{A}^j_{\mathcal{F}, L^2}(S(A_0))_{\C}.
$$
Then $\omega \in \mrm{Harm}^j_{L^2}(S(A_0))_{\C}$ if and only if for any $x \in \R^\times_+$ the functions defined by $\C^g \times V_\xi \Gamma \longrightarrow \C$, $(z, \hat{v}) \longmapsto \alpha_{i_1 \ldots i_j}(z, \hat{v}, x)$ are constant.
\end{lem}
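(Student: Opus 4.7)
I would prove the equivalence in both directions.

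The direction $(\Leftarrow)$ is immediate: if each $\alpha_{i_1\ldots i_j}(z,\hat v,x)$ depends only on $x$, then every leafwise partial derivative $\partial\alpha/\partial u_k$ vanishes. Since the passage from the real orthonormal basis $(du_i)$ of Lem. \ref{basis} to the basis $(d\tau_i)$ is a constant $\C$-linear transformation on each leaf and hence commutes with leafwise differentiation, the formulas of Lem. \ref{formulae} give at once $d^j_{\mathcal{F}}\omega = 0$ and $d^{j\dagger}_{\mathcal{F}}\omega = 0$, whence $\omega \in \mathrm{Harm}^j_{L^2}(S(A_0))_{\C}$.

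For the direction $(\Rightarrow)$, my plan is a Bochner argument on the compact space $S(A_0)$ (Cor. \ref{compact}), followed by a transverse harmonic-analysis step. Harmonicity means $\tilde{d}^j_{\mathcal{F}}\omega = 0 = \tilde{d}^{j*}_{\mathcal{F}}\omega$, so $\langle \Delta^j_{\mathcal{F}}\omega,\omega\rangle = 0$. Substituting the explicit formula of Lem. \ref{formulae} for $\Delta^j_{\mathcal{F}}\omega$ and integrating by parts on a compact fundamental domain $F \times T_\xi\Gamma \times [1,q]$ provided by Lem. \ref{technique} (boundary contributions on $\partial F$ cancel by the twisted $\Gamma$-invariance of the coefficients combined with the translation invariance of $\mu_\xi$ on $T_\xi\Gamma$; the transverse weight $x^{-j-1}$ is constant in the leafwise directions and so plays no role), I obtain the Bochner-type identity
\[
0 = \langle \Delta^j_{\mathcal{F}}\omega,\omega\rangle = \int_{S(A_0)} x^{-j-1}\sum_{I}\sum_{k=1}^{2g}\Bigl|\frac{\partial\alpha_I}{\partial u_k}\Bigr|^2 d\mu,
\]
forcing $\partial\alpha_I/\partial u_k = 0$ almost everywhere for every multi-index $I$ and every $k$. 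Hence, for a.e.\ $(\hat v,x)$, the coefficient $\alpha_I(\cdot,\hat v,x)$ is constant on $\C^g$, so $\alpha_I(z,\hat v,x) = f_I(\hat v,x)$.

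To promote this into constancy in $\hat v$, I would pass via Lem. \ref{technique} to the compact model $\C^g \times_\Gamma T_\xi\Gamma$, where the invariance $\alpha_I(z+\gamma,\hat v-\gamma,x) = \alpha_I(z,\hat v,x)$ now reads $f_I(\hat v-\gamma,x) = f_I(\hat v,x)$ for every $\gamma\in\Gamma$. The natural map $\Gamma\to T_\xi\Gamma$ has dense image: indeed $\bigcap_\nu\xi^\nu\Gamma = 0$ because $\det\xi = q^g\neq 0$, and $T_\xi\Gamma$ is by construction the resulting completion. By Fubini, $f_I(\cdot,x)\in L^2(T_\xi\Gamma,\mu_\xi)$ for almost every $x$, and since $T_\xi\Gamma$ is a compact abelian group, Fourier analysis shows that an $L^2$-function invariant under translations by a dense subgroup is constant almost everywhere (only the trivial character can be trivial on a dense subgroup). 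This completes the argument.

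The step I expect to be most delicate is the justification of the Bochner identity in the foliated $L^2$ setting with the weight $x^{-j-1}$ coming from $\tilde{g}$; it is, however, not truly difficult because the $\partial/\partial u_k$ are purely leafwise while $x$ is transverse, so the integration by parts reduces, via the fibre bundle $\C^g \times_\Gamma T_\xi\Gamma \to A(\C) = \C^g/\Gamma$ over the closed manifold $A(\C)$, to a computation with no genuine boundary contribution.
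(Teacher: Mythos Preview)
Your $(\Leftarrow)$ direction matches the paper's. For $(\Rightarrow)$ the paper takes a different route: it fixes $x$, views $\alpha_{i_1\ldots i_j}(\cdot,\cdot,x)$ as an element of $L^2(\overline M)$ for the compact abelian group $\overline M=\C^g\times_V V_\xi\Gamma$, expands it in characters $\chi\otimes\chi'$ with $\chi|_V=\chi'|_V$, and uses only the \emph{weak} condition $\langle\omega,\Delta^j_{\mathcal F}\beta\rangle=0$ together with $\Delta\chi_w=-|w|^2\chi_w$ to kill every Fourier coefficient with $\chi\neq 1$; density of $V$ in $V_\xi\Gamma$ then forces $\chi'=1$ as well. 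This handles the $z$- and $\hat v$-directions in a single stroke and never leaves the distributional setting.

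Your Bochner-plus-density strategy is a legitimate alternative, and your second step (invariance under $\Gamma$ plus density of $\Gamma$ in $T_\xi\Gamma$) is the exact analogue of the paper's density step. The soft point is the one you yourself flag: writing $\langle\Delta^j_{\mathcal F}\omega,\omega\rangle$ and integrating by parts to reach $\int x^{-j-1}\sum_{I,k}|\partial\alpha_I/\partial u_k|^2\,d\mu=0$ presupposes that the leafwise first derivatives of the $L^2$-form $\omega$ lie in $L^2$, which does not follow formally from $\tilde d^j_{\mathcal F}\omega=0=\tilde d^{j*}_{\mathcal F}\omega$ (that identity is tautologically $0=0$). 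You need an extra ingredient---leafwise elliptic regularity for $\Delta_{\mathcal F}$, or a mollification/approximation argument---to bridge the gap. This is certainly available here since the leaves are flat tori and the transverse weight $x^{-1}$ is leafwise constant, but it is a genuine step, not just bookkeeping; the paper's Fourier argument is designed precisely to avoid it, since Fourier coefficients of an $L^2$-function are always defined and the Laplacian acts diagonally on characters.
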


\begin{proof} The fact that the condition is sufficient follows from the first two statements of Lem. \ref{formulae}. Let us prove that the condition is necessary. Fix $x \in \R^\times_+$. Denote by $\overline{M}$ the abelian group $\C^g \times_V V_{\xi} \Gamma$. As $\alpha_{i_1 \ldots i_j}$ is in $L^2_{loc}$ and $\overline{M}$ is compact (Cor. \ref{compact}), the function $(z, \hat{v}) \longmapsto \alpha_{i_1 \ldots i_j}(z, \hat{v}, x)$ is in $L^2(\overline{M})$. The character group of $\overline{M}$ is
$$
\left\{ \chi \otimes \chi' \,|\, \chi \in (\C^g)^\vee, \chi' \in (V_{\xi} \Gamma)^\vee, \chi|_V=\chi'|_V \right\}
$$
hence, by Fourier theory, one has the equality
$$
\alpha_{i_1 \ldots i_j}(z, \hat{v}, x) = \sum_{\chi|_V=\chi'|_V} a_{\chi, \chi'} \chi(z) \chi'(\hat{v})
$$
in $L^2(\overline{M})$. Any character $\chi$ of $\C^g$ is of the form $$\chi(z)=\chi_w(z)=\prod_{1 \leq k \leq g} \exp(w_k z_k- \overline{w}_k \overline{z}_k)$$ for a uniquely determined $w=(w_1, \ldots, w_g) \in \C^g$. Because $$\omega \in \mrm{Harm}^j_{L^2}(S(A_0)),$$ we have $\langle \omega, \Delta^j_{\mathcal{F}} \beta \rangle =0$ for every $\beta \in \mathcal{A}^j_{\mathcal{F}}(S(A_0))$. Since distributional derivatives commute with convergent series of distributions it follows that
$$
 \sum_{\chi|_V=\chi'|_V} a_{\chi, \chi'} \Delta(\chi(z)) \chi'(\hat{v})=0
$$ where $\Delta$ is the usual Laplacian on $\C^g=\R^{2g}$ (see the third statement of Lem. \ref{formulae}). As $\Delta \chi_w(z)=-|w|^2 \chi_w(z)$, the coefficients $a_{\chi, \chi'}$ corresponding to the non-trivial $\chi$ are zero. This implies that the $\chi'$ such that $a_{\chi, \chi'} \neq 0$ are trivial on $V$ hence are trivial on $V_{\xi} \Gamma$ by density of $V$ in $V_{\xi} \Gamma$. This shows that $\alpha_{i_1 \ldots i_j}(z, \hat{v}, x)$ does not depend on $(z, \hat{v}) \in \C^g \times V_{\xi} \Gamma$.
\end{proof}

Recall that we work with a basis of the $\R$-vector space $\Gamma \otimes \R=\C^g$ given by Lem. \ref{basis}. In particular,  each $d\tau_i$ is an eigenvector of $\xi_{\C}$. Let us denote by $\mu_i$ the corresponding eigenvalue and let us fix a branch $\log_q$ of the complex logarithm for the basis $q$.

\begin{cor} The Hilbert space $\mrm{Harm}^j_{L^2}(S(A_0))_{\C}$ has an orthonormal basis consisting of the
$$
x^{\frac{2 \pi i \nu}{\log q}+\log_q(\prod_{k=1}^j \mu_{i_k})} d\tau_{i_1} \wedge \ldots \wedge d\tau_{i_j}
$$
where $\nu \in \Z$ and $1 \leq i_1 < \ldots<  i_j \leq 2g$.
\end{cor}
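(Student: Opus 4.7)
The plan is to combine Lem. \ref{harmo}, the $q^\Z$-equivariance stated just above, and classical Fourier analysis on $\R^\times_+/q^\Z$. Set $c_I := \prod_{k=1}^j \mu_{i_k}$. By Lem. \ref{harmo}, every $\omega \in \mrm{Harm}^j_{L^2}(S(A_0))_\C$ has coefficients $\alpha_I(z, \hat{v}, x) = \alpha_I(x)$ depending only on $x$. Since each $d\tau_i$ is an eigenvector of $\xi_\C$ with eigenvalue $\mu_i$ (Lem. \ref{basis}), one has $\xi^{\nu*}_\C(d\tau_{i_1} \wedge \ldots \wedge d\tau_{i_j}) = c_I^\nu \, d\tau_{i_1} \wedge \ldots \wedge d\tau_{i_j}$. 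The $q^\Z$-equivariance then yields the scalar functional equation
$$
\alpha_I(q^\nu x) = c_I^\nu \, \alpha_I(x), \qquad \nu \in \Z,\; x \in \R^\times_+.
$$

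Next I would solve this equation. Setting $u := (\log x)/\log q$ and writing $\alpha_I(x) = x^{\log_q c_I} f_I(u)$ turns it into $f_I(u+\nu) = f_I(u)$, so $f_I$ is $1$-periodic. Expanding $f_I$ in a Fourier series in $L^2(\R/\Z)$ gives
$$
\alpha_I(x) = \sum_{\nu \in \Z} a^I_\nu \, x^{\log_q c_I + 2\pi i \nu/\log q},
$$
with $\sum_\nu |a^I_\nu|^2 < \infty$. This shows that the proposed family already spans a dense subspace of $\mrm{Harm}^j_{L^2}(S(A_0))_\C$.

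For orthonormality I would use two inputs from Lem. \ref{basis}. First, the real basis $(\partial/\partial x_k, \partial/\partial y_k)$ is $\re H$-orthonormal, so the hermitian extension $h$ of $g_j$ makes the $d\tau_i$'s pairwise orthogonal and $|d\tau_I|^2_h$ proportional to $x^{-j}$. Second, the proof of Lem. \ref{basis} shows that $q^{-1/2}\xi_\R$ is orthogonal, hence $|\mu_i|^2 = q$ for every $i$ and therefore $|c_I|^2 = q^j$. Consequently $\re(\log_q c_I) = j/2$, so $\bigl|x^{\log_q c_I + 2\pi i \nu/\log q}\bigr|^2 \, x^{-j} = 1$, which proves that the density $|\alpha_I(x)|^2 |d\tau_I|^2_h$ is genuinely $q^\Z$-invariant on $\R^\times_+$. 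Integrating over a fundamental domain $[1,q)$ against the measure of Lem. \ref{mesure} and invoking the orthogonality of the characters of $\R/\Z$ then produces $\log q \cdot \delta_{I, I'} \, \delta_{\nu, \nu'}$, up to a global positive constant that is absorbed into the normalization of the Haar measure $\mu_\xi$. Combined with the spanning property above, this delivers the claimed orthonormal basis.

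The main obstacle is the role of the identity $|\mu_i|^2 = q$: this Riemann-hypothesis-type input is essential, since without it $|\alpha_I(x)|^2 |d\tau_I|^2_h$ would fail to be $q^\Z$-invariant on $\R^\times_+$ and the proposed basis elements would not even have finite $L^2$ norm on $S(A_0)$. The critical step is therefore the appeal to the orthogonality of $q^{-1/2}\xi_\R$ established in Lem. \ref{basis}.
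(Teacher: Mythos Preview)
Your proof is correct and follows essentially the same line as the paper's: reduce to functions of $x$ via Lem.~\ref{harmo}, derive the functional equation from the $q^{\Z}$-equivariance (6), and then invoke the Fourier basis of $L^2(\R^\times_+/q^{\Z})$. The only cosmetic difference is that the paper quotes Weil directly for $\mu_i\overline{\mu_i}=q$ in the norm computation, whereas you extract it from the orthogonality of $q^{-1/2}\xi_{\R}$ noted in the proof of Lem.~\ref{basis}; these amount to the same input.
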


\begin{proof}
According to Lem. \ref{harmo} an element $$\alpha_{i_1 \ldots i_j} d\tau_{i_1} \wedge \ldots \wedge d\tau_{i_j}$$ of $\mathcal{A}^j_{\mathcal{F}, L^2}(S(A_0))_{\C}$ belongs to $\mrm{Harm}^j_{L^2}(S(A_0))_{\C}$ if and only if $\alpha_{i_1 \ldots i_j}$ does not depend on the variables in $\C^g$ and $V_{\xi} \Gamma$. By (6), we have
$$
\alpha_{i_1 \ldots i_j}(q^\nu x)=\left(\prod_{k=1}^j \mu_{i_k} \right)^\nu \alpha_{i_1 \ldots i_j}(x)
$$
for any $\nu \in \Z$ and $x \in \R^\times_+$. As a consequence, the function 
$$
x \longmapsto \left(\prod_{k=1}^j \mu_{i_k} \right)^{-\log_q x} \alpha_{i_1 \ldots i_j}(x)=x^{-\log_q(\prod_{k=1}^j \mu_{i_k})} \alpha_{i_1 \ldots i_j}(x)
$$ is in $L^2(\R_+^\times/q^{\Z}, \C)$. Then the fact that the family $$\left(x^{\frac{2 \pi i \nu}{\log q}+\log_q(\prod_{k=1}^j \mu_{i_k})} d\tau_{i_1} \wedge \ldots \wedge d\tau_{i_j} \right)_{\nu \in \Z, 1 \leq i_1<\ldots<i_j \leq 2g}$$ is a basis follows from the fact that $\left(x^{\frac{2 \pi i \nu}{\log q}} \right)_{\nu \in \Z}$ is a basis of $L^2(\R_+^\times/q^{\Z}, \C)$. The orthogonality statement is obvious. The fact that the vectors have norm equal to one follows from the fact that the eigenvalues $\mu$ of $\xi$ verify $\mu \overline{\mu}=q$ by Weil.
\end{proof}

Since $\mu$ is $\phi^t$-invariant, for $\omega, \omega' \in \mathcal{A}^j_{\mathcal{F}}(S(A_0))$ we have $$\langle \phi^{t *}\omega, \phi^{t *}\omega' \rangle=e^{jt}\langle \omega, \omega' \rangle.$$
Hence $e^{-jt/2}\phi^{t *}$ is an orthogonal operator on the real Hilbert space $\mathcal{A}^j_{\mathcal{F}}(S(A_0))$. It follows that $\phi^{t*}$ has a unique extension to $\mathcal{A}^j_{\mathcal{F}, L^2}(S(A_0))$, still denoted by $\phi^{t*}$, such that $e^{-jt/2} \phi^{t*}$ is orthogonal. As a consequence $\phi^{t*}$ commutes with $\tilde{d}^j_{\mathcal{F}}$ and with $\tilde{d}_{\mathcal{F}}^{j*}$. In particular $\phi^{t *}$ leaves $\mrm{Harm}^j_{L^2}(S(A_0))$ invariant. Let $\alpha \in \mathcal{C}^\infty_0(\R)$ be a test function on $\R$. Consider the bounded operator
$$
S_j(\alpha)=\int_{\R} \alpha(t) \phi^{t*}_{\C}dt
$$
on the complex Hilbert space $\mrm{Harm}^j_{L^2}(S(A_0))_{\C}$ where $\phi^{t*}_{\C}=\phi^{t*} \otimes \id_{\C}$.

\begin{pro} \label{eigenvalues} Let $0 \leq j \leq 2g$. For every $\alpha \in \mathcal{C}^\infty_0(\R)$, the operator $S_j(\alpha)$ is of trace class and its trace is given by
$$
\Tr\left( S_j(\alpha)\,|\, \mrm{Harm}^j_{L^2}(S(A_0))_{\C} \right)=\sum_{\rho_j} \Phi(\rho_j)
$$
where the sum is indexed by the zeroes of $P_j(q^{-s})$ and where $$\Phi(s)=\int_{\R}e^{ts}\alpha(t)dt.$$
\end{pro}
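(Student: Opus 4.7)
The plan is to diagonalise $S_j(\alpha)$ explicitly in the orthonormal basis furnished by the preceding corollary, and then match its eigenvalues with the zeroes of $P_j(q^{-s})$.

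First, I would compute the action of $\phi^{t*}_\C$ on the basis vectors
$$
b_{\nu, I} := x^{s_{\nu, I}} \, d\tau_{i_1} \wedge \ldots \wedge d\tau_{i_j}, \quad s_{\nu, I} := \frac{2\pi i \nu}{\log q} + \log_q\!\Bigl(\prod_{k=1}^{j} \mu_{i_k}\Bigr),
$$
with $\nu \in \Z$ and $I = (i_1 < \ldots < i_j)$. Lifted to the universal cover, the flow acts as $(z, \hat{v}, x) \mapsto (z, \hat{v}, e^t x)$, so it fixes each $d\tau_{i_k}$ and only rescales the $x$-variable. Consequently $\phi^{t*}_\C b_{\nu, I} = e^{t s_{\nu, I}} b_{\nu, I}$. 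Integrating against $\alpha$ then gives
$$
S_j(\alpha) \, b_{\nu, I} = \Phi(s_{\nu, I}) \, b_{\nu, I},
$$
so $S_j(\alpha)$ is diagonal in this orthonormal basis.

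Second, I would verify the trace class property. Since the operator is diagonal, it suffices to show $\sum_{\nu, I} |\Phi(s_{\nu, I})| < \infty$. For fixed $I$, writing $c_I = \log_q\!\bigl(\prod_k \mu_{i_k}\bigr)$,
$$
\Phi(s_{\nu, I}) = \int_\R e^{t c_I}\alpha(t) \, e^{2\pi i t \nu / \log q} \, dt
$$
is, up to sign, the Fourier transform at $\nu/\log q$ of the smooth compactly supported function $t \mapsto e^{t c_I} \alpha(t)$. By standard Paley--Wiener type decay, it is rapidly decreasing in $\nu$, hence summable. As $I$ ranges over a finite set of $\binom{2g}{j}$ tuples, the full double sum converges absolutely, and $S_j(\alpha)$ is trace class with
$$
\Tr S_j(\alpha) = \sum_{\nu \in \Z} \sum_{1 \leq i_1 < \ldots < i_j \leq 2g} \Phi(s_{\nu, I}).
$$

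Third, I would match the index set of this sum with the zeroes of $P_j(q^{-s})$. A complex number $s$ is a zero of $P_j(q^{-s}) = \prod_I (1 - \mu_{i_1}\cdots \mu_{i_j} q^{-s})$ exactly when $q^{s} = \mu_{i_1}\cdots \mu_{i_j}$ for some tuple $I$, which amounts to $s = c_I + 2\pi i \nu/\log q$ for some $\nu \in \Z$; multiplicities in $P_j$ correspond to the multiple tuples $I$ yielding equal products $\prod \mu_{i_k}$, which also give distinct basis vectors in $\mathrm{Harm}^j_{L^2}(S(A_0))_\C$. Hence the indexing coincides with the zeroes of $P_j(q^{-s})$ counted with multiplicity, and the stated formula follows.

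The conceptual content is minimal: everything rests on having the explicit orthonormal basis of Cor.~3.5, on the fact that the flow acts only on the transverse coordinate $x$, and on the Weil identity $\mu \overline{\mu} = q$ used to normalise the basis; the only nontrivial check is the Schwartz-type decay giving trace class, which is the routine consequence of $\alpha$ being smooth and compactly supported.
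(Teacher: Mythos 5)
Your proof is correct and follows the same route as the paper: you diagonalise $S_j(\alpha)$ in the orthonormal basis of harmonic forms from the preceding corollary, observe that $\phi^{t*}$ rescales each $b_{\nu,I}$ by $e^{t s_{\nu,I}}$, establish trace class via Fourier decay in $\nu$, and identify the exponents $s_{\nu,I}$ with the zeroes of $P_j(q^{-s})$ counted with multiplicity. The paper leaves the summability step to "straightforward estimates of Fourier coefficients," so your explicit Paley--Wiener justification is simply a fuller rendering of the same argument.
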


\begin{proof} For every $\nu \in \Z$ and every $1 \leq i_1<\ldots<i_j \leq 2g$, the basis vector $$x^{\frac{2 \pi i \nu}{\log q}+\log_q(\prod_{k=1}^j \mu_{i_k})} du_{i_1} \wedge \ldots \wedge du_{i_j}$$ is an eigenvector of $\phi^{t*}$ with eigenvalue $e^{t\rho_j}$ where $$\rho_j=\frac{2 \pi i \nu}{\log q}+\log_q(\prod_{k=1}^j \mu_{i_k}).$$ These $\rho_j$ are precisely the zeroes of $P_j(q^{-s})$. Hence we need to show that
$$
\sum_{\nu \in \Z, 1 \leq i_1<\ldots<i_j \leq 2g} \left| \int_{\R} e^{t \rho_j} \alpha(t) dt \right|  < +\infty.
$$
But this follows from straightforward estimates of Fourier coefficients.
\end{proof}

\section{A transversal index computation on $(S(A_0), \mathcal{F}, \phi^t)$}

The transversal index of the de Rham complex of $S(A_0)$ along $\mathcal{F}$ is defined as the distribution
$$
\mrm{Ind}_t(d_{\mathcal{F}}): \alpha \longmapsto \mrm{Ind}_t(d_{\mathcal{F}})(\alpha)=\sum_{j=0}^{2g}(-1)^j \Tr\left( S_j(\alpha)\,|\, \mrm{Harm}^j_{L^2}(S(A_0))_{\C} \right).
$$
Then by Prop. \ref{eigenvalues}, the left hand side of the explicit formula (\ref{EF}) for $\alpha$ equals $\mrm{Ind}_t(d_{\mathcal{F}})(\alpha)$. In this section, we will interpret the left hand term of (\ref{EF}) in terms of the dynamical system $(S(A_0), \mathcal{F}, \phi^t)$ by proving the existence of a natural bijection between the set of primitive compact orbits of $\phi^t$ and the set of closed points of the abelian variety $A_0$. Furthermore, according to Deninger's analogy \cite{deninger3}, if a primitive compact orbit $\gamma$ corresponds to the closed point $x$, then the length of $\gamma$ should be $\log N(x)$ where $N(x)=q^{\deg(x)}$ denotes the cardinality of the residue field of $x$. To prove these facts, we need a preliminary lemma.\\

For any compact orbit $\gamma$ of $\phi^t$ on $S(A_0)$, denote by $l(\gamma)$ the length of $\gamma$. Note that if $[z, \hat{v}, x] \in S(A_0)$ is such that $\phi^t[z, \hat{v}, x]=[z, \hat{v}, x]$ then necessarily $t=\nu \log q$ for some integer $\nu \geq 0$. This shows that the length of any compact orbit is of the form $\nu \log q$ for some integer $\nu \geq 0$. 

\begin{lem} \label{technique-pour-noeuds1} There is a natural bijection between the set of primitive compact orbits of $\phi^t$ of length $\nu \log q$ and the set of $\xi$-orbits on $\C^g/\Gamma$ of order $\nu$.
\end{lem}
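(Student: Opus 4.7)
The plan is to use the isomorphism of Lem.~\ref{technique} to write $S(A_0) \simeq (\C^g \times_{\Gamma} T_\xi \Gamma) \times_{q^{\Z}} \R^\times_+$ and then translate the periodicity condition into a congruence for $\bar z \in \C^g/\Gamma$ coupled with an equation in $T_\xi \Gamma$ that uniquely determines the remaining coordinate.

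First I would unwind the dynamics. A point $[z,\hat w,x] \in S(A_0)$ satisfies $\phi^T[z,\hat w,x] = [z,\hat w,x]$ for some $T>0$ if and only if the $q^{\Z}$-action identifies $([z,\hat w],e^T x)$ with $([z,\hat w],x)$, which forces $T = \nu \log q$ for some $\nu \geq 1$ together with
$$
[\xi_{\R}^{\nu} z,\xi^{\nu} \hat w] = [z,\hat w] \quad \text{in } \C^g \times_{\Gamma} T_\xi \Gamma.
$$
Unfolding the $\Gamma$-action $\gamma\cdot(z,\hat w) = (z+\gamma,\hat w-\gamma)$, this amounts to the existence of $\gamma \in \Gamma$ with $\xi_{\R}^{\nu} z - z = \gamma$ and $(\xi^{\nu}-1)\hat w = -\gamma$ in $T_\xi \Gamma$. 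The first equation is precisely $\xi^{\nu}\bar z = \bar z$ in $\C^g/\Gamma$, and the orbit through $[z,\hat w,x]$ is primitive of length $\nu \log q$ exactly when $\nu$ is the smallest such integer.

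The key technical input is that $\xi^{\nu}-1$ is invertible on $T_\xi \Gamma$. By (\ref{technique2}) we have $T_\xi \Gamma = \Gamma''_p$, and from \cite{deligne}~6.1--6.2, $\xi(\Gamma''_p) = q\Gamma''_p \subseteq p\Gamma''_p$; hence $\xi^{\nu} \equiv 0 \pmod{p}$ on the $p$-adically complete module $\Gamma''_p$, so $\xi^{\nu}-1 \equiv -1 \pmod{p}$ is an automorphism of $T_\xi \Gamma$. Thus, for every lift $z \in \C^g$ of a class $\bar z \in \C^g/\Gamma$ satisfying $\xi^{\nu}\bar z = \bar z$, the element $\hat w = -(\xi^{\nu}-1)^{-1}\gamma$ with $\gamma = \xi_{\R}^{\nu}z - z$ is uniquely determined.

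It remains to assemble the bijection and check that the choices are harmless. To a primitive compact orbit of length $\nu \log q$ through $[z,\hat w,x]$ I associate the $\xi$-orbit of $\bar z$ in $\C^g/\Gamma$: the $\Gamma$-action preserves $\bar z$, the $q^{\Z}$-action shifts $\bar z$ by a power of $\xi$, and the flow $\phi^t$ fixes it. Conversely, given a $\xi$-orbit of order $\nu$, pick a representative $\bar z$, any lift $z$ and any $x \in \R^\times_+$, define $\hat w$ as above, and take the orbit of $[z,\hat w,x]$; changing the lift by $\gamma_0 \in \Gamma$ replaces $(\gamma,\hat w)$ by $(\gamma + (\xi^{\nu}-1)\gamma_0,\hat w - \gamma_0)$, which is $\Gamma$-equivalent to the original, and changing $\bar z$ within its $\xi$-orbit is absorbed by the $q^{\Z}$-action. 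The main subtlety I expect is primitivity: were the resulting orbit of period $\nu' \log q$ with $\nu' < \nu$, the first paragraph would give $\xi^{\nu'}\bar z = \bar z$, contradicting the minimality of $\nu$; this shows that the two constructions are mutually inverse.
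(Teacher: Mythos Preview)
Your proof is correct and follows essentially the same route as the paper: pass to the model $(\C^g \times_\Gamma T_\xi\Gamma)\times_{q^{\Z}}\R^\times_+$ via Lem.~\ref{technique}, reduce the periodicity condition $\phi^{\nu\log q}[z,\hat w,x]=[z,\hat w,x]$ to $\xi^{\nu}\bar z=\bar z$ in $\C^g/\Gamma$ together with a uniquely solvable equation for $\hat w$, and conclude by checking well-definedness and primitivity. The paper phrases the first reduction as intersecting orbits with the slice $\overline{M}\times\{1\}$, but the content is identical.

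The only noteworthy difference is your justification that $\xi^{\nu}-1$ is invertible on $T_\xi\Gamma$: you argue directly that $\xi$ is topologically nilpotent on $T_\xi\Gamma=\Gamma''_p$ (since $\xi(\Gamma''_p)=q\Gamma''_p\subseteq p\Gamma''_p$), hence $\xi^{\nu}-1\equiv -1\pmod p$ is a unit. The paper instead invokes the Verschiebung $V$ and writes a formula for the inverse. Your argument is more transparent and avoids any dependence on the precise form of that formula.
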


\begin{proof} Let us denote by $\overline{M}$ be the quotient space $\C^g \times_V V_\xi \Gamma$. Then $q^{\Z}$ acts on $\overline{M}$ by $q^\nu[z, \hat{v}]=[\xi^{-\nu}_{\R}z, \xi^{-\nu} \hat{v}]$. Identify $\overline{M}$ with its image $\overline{M} \times \{1\}$ in $S(A_0)$. Then the map $\gamma \longmapsto \gamma \cap \overline{M}$ is a bijection between the set of primitive compact orbits of $\phi^t$ on $S(A_0)$ of length $\nu \log q$ and the finite orbits of $q^{-1}$ on $\overline{M}$ of order $\nu$. According to Lem. \ref{technique}, there is a natural bijection $\C^g \times_\Gamma T_\xi \Gamma \simeq \C^g \times_V V_\xi \Gamma$ which is equivariant with respect to the diagonal action of $\xi$ on both sides. Furthermore, under the projection $\C^g \times_\Gamma T_\xi \Gamma \longrightarrow \C^g/\Gamma$, the orbits of order $\nu$ of the diagonal action of $\xi$ are mapped bijectively onto the $\xi$-orbits of order $\nu$ on $\C^g/\Gamma$. The inverse map sends the orbit of $z+\Gamma$ to the orbit of $[z, \hat{\gamma}]$ where $\hat{\gamma}=(1-\xi^\nu)^{-1} \gamma$ if $\gamma=\xi^{\nu}z-z \in \Gamma$. Note that this makes sense because $1-\xi^{\nu}$ is invertible on $T_\xi \Gamma$: indeed if we denote by $V$ the Verschiebung endomorphism of $\Gamma$ we have $(1-\xi^\nu)^{-1}=V^{\nu}(1-q^{\nu})^{-1}$.
\end{proof}

The proof of the next result is a direct generalisation of the one of \cite{deninger} Prop. 3.3. 

\begin{pro} \label{noeuds}  There is a natural bijection between the set of closed points of $A_0$ and the set of primitive compact orbits of $\phi^t$ on $S(A_0)$ such that if $x$ corresponds to $\gamma$, then 
$$
l(\gamma)=\log N(x),
$$
where $N(x)=q^{\deg(x)}$ denotes the cardinality of the residue field of $x$.
\end{pro}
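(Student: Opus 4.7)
By Lem.~\ref{technique-pour-noeuds1}, it suffices to construct a natural bijection between closed points $x \in |A_0|$ of degree $\nu$ and $\xi$-orbits of order $\nu$ on $\C^g/\Gamma$. Closed points of degree $\nu$ are, by standard Galois theory, in bijection with $\phi_0$-orbits of size $\nu$ in $A_0(\overline{k}) = \bigcup_\nu A_0(\F_{q^\nu})$, and $A_0(\F_{q^\nu}) = \ker(\phi_0^\nu - [1])(\overline{k})$. On the other hand, a $\xi$-orbit of order $\nu$ on $\C^g/\Gamma$ is a $\xi$-orbit among points $z + \Gamma$ satisfying $(\xi_\R^\nu - \id)(z) \in \Gamma$, i.e., among fixed points of $\xi^\nu$ acting on $\C^g/\Gamma$. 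Thus the goal reduces to a canonical, Frobenius-equivariant bijection
\[
A_0(\F_{q^\nu}) \;\longleftrightarrow\; \ker(\xi^\nu_\R - \id : \C^g/\Gamma \to \C^g/\Gamma).
\]

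The bridge between the two sides is the Serre--Tate canonical lift. Since $\phi$ lifts $\phi_0$, the isogeny $\phi^\nu - [1]$ on $\mathcal{A}$ lifts $\phi_0^\nu - [1]$. I would first show that $\phi^\nu - [1]$ is étale over $W$: over the generic fibre $A$, any isogeny between abelian varieties in characteristic zero is étale; over the special fibre, the relation $d\phi_0 = 0$ (purely inseparable Frobenius) forces $d(\phi_0^\nu - [1]) = -\id$ to be invertible, hence $\phi_0^\nu - [1]$ is étale. Consequently, $K_\nu := \ker(\phi^\nu - [1])$ is a finite étale group scheme over $W$. Étaleness gives
\[
A_0(\F_{q^\nu}) = K_\nu(\overline{k}) \;\xleftarrow{\sim}\; K_\nu(W^{\ur}) \;\xrightarrow{\sim}\; K_\nu(\overline{K}),
\]
and the embedding $\iota : K \hookrightarrow \C$ (extended to $\overline{K} \hookrightarrow \C$) induces $K_\nu(\overline{K}) \xrightarrow{\sim} K_\nu(\C)$. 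Since $\Theta_\iota(\phi \otimes K)$ acts on $A(\C) = \C^g/\Gamma$ as the map induced by $\xi_\R$, we identify $K_\nu(\C) = \ker(\xi_\R^\nu - \id : \C^g/\Gamma \to \C^g/\Gamma)$.

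The resulting composite bijection is equivariant for the Frobenius action: $\phi_0$ on the left corresponds, through the Serre--Tate lift and $\iota$, to the action induced by $\xi$ on the right. Passing to orbits, Frobenius orbits of size $\nu$ in $A_0(\overline{k})$ biject with $\xi$-orbits of order $\nu$ on $\C^g/\Gamma$. Combining with Lem.~\ref{technique-pour-noeuds1}, if $x \in |A_0|$ of degree $\deg(x) = \nu$ corresponds to a primitive compact orbit $\gamma$, then $l(\gamma) = \nu \log q = \log q^{\deg(x)} = \log N(x)$, as required.

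\textbf{Main obstacle.} The conceptual content is the equivariance of the chain of bijections. Once étaleness of $\phi^\nu - [1]$ is established, the identifications through reduction mod $p$ and through $\iota$ are straightforward, but one has to be careful that Frobenius on $A_0(\overline{k})$ is tracked correctly across the unramified lift to $W^{\ur}$, then to $\overline{K}$, and finally to $\C$ where it becomes the action of $\xi$ through $\Theta_\iota$. This is precisely the ordinary/higher-dimensional analogue of the argument in \cite{deninger} Prop.~3.3, and no new idea is needed beyond handling the fact that $\dim A_0 = g > 1$.
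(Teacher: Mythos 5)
Your proposal is correct, and it takes a genuinely different, and in my view cleaner, route than the paper's. The paper works with the full torsion subgroup of $A(\C)$: it constructs a reduction map $A(\C)_{tors} \to A_0(\overline{\F}_q)$, handles the prime-to-$p$ and $p$-power parts separately (using \cite{deligne}~(3.1) for the former and the Serre--Tate splitting $\mathcal{A}(\overline{W})_{p^n} = \mathcal{G}^0_{p^n} \times (\Z/p^n\Z)^g$ for the latter), passes to $(\xi^n-1)$- and $(\phi_0^n-1)$-fixed points, and concludes the resulting surjection is a bijection by a cardinality count using $\det(\xi^n - 1) = |A_0(\F_{q^n})|$. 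You instead observe that $d\phi_0 = 0$ forces $d(\phi_0^\nu - [1]) = -\id$ to be invertible, so $\phi_0^\nu - [1]$ is \'etale, hence $K_\nu = \ker(\phi^\nu - [1])$ is a finite flat group scheme over $W$ with \'etale special fibre and therefore \'etale over $W$; \'etaleness then gives canonical identifications of $K_\nu(\overline{k})$, $K_\nu(W^{\mathrm{ur}})$, $K_\nu(\overline{K})$ and $K_\nu(\C)$ in one stroke. This bypasses the separate treatment of $p$-power torsion and the counting argument entirely, reducing the Serre--Tate input to the bare existence of the Frobenius lift $\phi$ on the canonical lift $\mathcal{A}$. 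The only point worth making fully explicit in a final write-up is the passage from ``unramified on the special fibre'' to ``\'etale over $W$'', which follows since for a finite flat $W$-scheme unramifiedness can be tested on the closed fibre by Nakayama; and, as you note yourself, one should say a word about why the composite $K_\nu(\overline{k}) \simeq K_\nu(\C)$ intertwines $\phi_0$ on the left with $\xi$ on the right, which follows from functoriality of all the identifications involved, exactly as in \cite{deninger} Prop.~3.3. Both proofs give the same bijection in the end, but yours makes the \'etaleness of the $(\phi^\nu - [1])$-kernel, which is the real mechanism behind the bijection, the explicit centerpiece rather than an implicit byproduct of the cardinality count.
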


\begin{proof} The closed points $x$ of $A_0$ such that $N(x)=n$ are in bijection with the $\phi_0$-orbits on $A_0(\overline{\F}_q)$ of order $n$. Hence, according to Lem. \ref{technique-pour-noeuds1}, it remains to construct a natural bijection between the $\xi$-orbits of order $n$ on $\C^g/\Gamma$ and the $\phi_0$-orbits of the same order on $A_0(\overline{\F}_q)$. Let $\overline{K}$ be the algebraic closure of $K$ in $\C$ and let $\overline{W}$ be the integral closure of $W$ in $\overline{K}$. Fix a maximal ideal $\overline{\mathfrak{m}}$ over $\mathfrak{m}$. Then $\overline{W}/\overline{\mathfrak{m}}=\overline{\F}_q$. By the valuative criterion of properness, the natural map $\mathcal{A}(\overline{W}) \longrightarrow A(\overline{K})$ is an isomorphism. As the torsion points of $A(\C)$ are algebraic over $K$, the inclusion $A(\overline{K}) \longrightarrow A(\C)$ induces an isomorphism on the torsion subgroups. As a consequence, we obtain a reduction map
$$
\mrm{red}: A(\C)_{tors} \simeq \mathcal{A}(\overline{W})_{tors} \longrightarrow \mathcal{A}(\overline{W}/\overline{\mathfrak{m}})_{tors}=A_0(\overline{\F}_q).
$$
This map is equivariant with respect to the actions of $\xi$ on the left and $\phi_0$ on the right. The group
$$
(A(\C)_{tors})^{\xi^n=1}=A(\C)^{\xi^n=1}=(\C^g/\Gamma)^{\xi^n=1}=(\xi^n-1)^{-1} \Gamma/\Gamma=\Gamma/(\xi^n-1)\Gamma
$$
is finite and has order $\det(\xi^n-1)$. The group $A_0(\overline{\F}_q)^{\phi_0^n=1}=A_0(\F_{q^n})$ has the same order according to the first statement of \cite{mumford} IV \S 21 Thm. 4. Moreover, for any integer $N$ prime to $p$, the restriction
$$
\mrm{red}: A(\C)_N \longrightarrow A_0(\overline{\F}_q)_N
$$
is an isomorphism (see \cite{deligne} (3.1)). As $\mathcal{A}/W$ is the Serre-Tate canonical lift of $A_0/k$, we have
$$
\mathcal{A}(\overline{W})_{p^n}=\mathcal{G}^0_{p^n} \times (\Z/p^n \Z)^g
$$
for some group $\mathcal{G}^0_{p^n}$ and where the natural projection $\mathcal{A}(\overline{W})_{p^n} \longrightarrow (\Z/p^n \Z)^g$ is identified to the reduction
$$
\mrm{red}: \mathcal{A}(\overline{W})_{p^n} \longrightarrow \mathcal{A}(\overline{W}/\overline{\mathfrak{m}})_{p^n}=A_0(\overline{\F}_q)_{p^n} \simeq (\Z/p^n \Z)^g.
$$
As a consequence, via the identification $\mathcal{A}(\overline{W})_N=A(\overline{K})_N$ we obtain
$$
A(\overline{K})_N=\mathcal{G}^0_{p^{v_p(N)}} \times A_0(\overline{\F}_q)_N
$$
for any integer $N \geq 0$. Taking $N=\det(\xi^n-1)=|A_0(\F_{q^n})|$ and passing to $\phi^n-1$ fixed modules we obtain a natural surjection $$A(\C)^{\xi^n=1}=A(\overline{K})^{\phi^n=1} \longrightarrow A_0(\F_{q^n})$$ which is a bijection because the left and right hand groups have the same order.
\end{proof}

Combining (\ref{EF}), Prop. \ref{eigenvalues} and Prop. \ref{noeuds} we obtain the following index theoretic way to write the explicit formula for an ordinary abelian variety of dimension $g$. 

\begin{cor} \label{main} The following equality holds in the space of distributions $\mathcal{D}'(\R)$:
$$
\mrm{Ind}_t(d_{\mathcal{F}})=\sum_{\gamma} l(\gamma) \sum_{k \geq 1} \delta_{k l(\gamma)}+\sum_{\gamma} l(\gamma) \sum_{k \leq -1} e^{gkl(\gamma)}\delta_{kl(\gamma)}.
$$
\end{cor}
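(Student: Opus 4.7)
The strategy is simply to chain together the three main results already established. Unfolding the definition of $\mrm{Ind}_t(d_{\mathcal{F}})$ and invoking Prop.~\ref{eigenvalues} identifies $\mrm{Ind}_t(d_{\mathcal{F}})(\alpha)$, for a test function $\alpha \in \mathcal{C}^\infty_0(\R)$, with the left-hand side of the explicit formula (\ref{EF}); the explicit formula then rewrites this quantity as a sum indexed by the closed points $x \in |A_0|$ involving test-function values $\alpha(k\deg(x)\log q)$; and the bijection of Prop.~\ref{noeuds} transports this sum over closed points into a sum over primitive compact orbits $\gamma$ of $\phi^t$, using the identity $l(\gamma)=\log N(x)=\deg(x)\log q$ to absorb all numerical factors at once.

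Concretely, for $\alpha \in \mathcal{C}^\infty_0(\R)$ one starts from
$$
\mrm{Ind}_t(d_{\mathcal{F}})(\alpha) = \sum_{j=0}^{2g} (-1)^j \Tr\bigl(S_j(\alpha) \mid \mrm{Harm}^j_{L^2}(S(A_0))_{\C}\bigr) = \sum_{j=0}^{2g} (-1)^j \sum_{\rho_j} \Phi(\rho_j),
$$
where the first equality is the definition of the transversal index and the second is Prop.~\ref{eigenvalues}. Applying the explicit formula (\ref{EF}) rewrites this as
$$
\log q \sum_{x \in |A_0|} \deg(x) \left( \sum_{k \geq 1} \alpha(k \deg(x) \log q) + \sum_{k \leq -1} q^{kg \deg(x)} \alpha(k \deg(x) \log q) \right).
$$
Prop.~\ref{noeuds} supplies a bijection $x \leftrightarrow \gamma$ with $l(\gamma) = \deg(x)\log q$, under which $k\deg(x)\log q = kl(\gamma)$ and $q^{kg\deg(x)} = e^{gkl(\gamma)}$; substituting yields
$$
\sum_{\gamma} l(\gamma) \left( \sum_{k \geq 1} \alpha(kl(\gamma)) + \sum_{k \leq -1} e^{gkl(\gamma)} \alpha(kl(\gamma)) \right),
$$
which is precisely the pairing against $\alpha$ of the distribution displayed on the right-hand side of the statement, once each $\alpha(kl(\gamma))$ is read as the Dirac evaluation $\delta_{kl(\gamma)}(\alpha)$.

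Since this identity holds for every $\alpha \in \mathcal{C}^\infty_0(\R)$, the desired equality in $\mathcal{D}'(\R)$ follows. There is no genuine obstacle at this stage of the paper: all substantive analytic work (identification of the spectrum in Prop.~\ref{eigenvalues}) and all substantive geometric work (matching orbits with closed points in Prop.~\ref{noeuds}) has already been done. The only task is the bookkeeping needed to match the classical factors $\log q$, $\deg(x)$ and $q^{kg\deg(x)}$ with their dynamical counterparts $l(\gamma)$ and $e^{gkl(\gamma)}$, which is immediate from $l(\gamma) = \deg(x)\log q$.
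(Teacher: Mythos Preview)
Your proposal is correct and follows exactly the approach the paper itself takes: the paper's entire argument is the single sentence ``Combining (\ref{EF}), Prop.~\ref{eigenvalues} and Prop.~\ref{noeuds}'' preceding the corollary, and you have simply spelled out that combination with the routine bookkeeping $l(\gamma)=\deg(x)\log q$ made explicit. There is nothing to add.
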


\begin{rems} There is a perfect analogy between the formula above and the transversal index formula \cite{deninger} Thm. 2.2 except for the factor $e^{gkl(\gamma)}$ in front of Dirac distribution $\delta_{kl(\gamma)}$ for $k \leq -1$. For an explanation of this dissymetry, see Rem (2) p. 213 in \cite{leichtnam1} and \cite{deninger1} p. 18. 
\end{rems}

\end{document}